
\documentclass[final]{siamltex}
\usepackage{epsfig}
\usepackage{color}
\usepackage{amsmath}
\usepackage{amscd}
\usepackage{amsmath, amsfonts, amssymb,mathrsfs}
\usepackage{multirow}
\usepackage{enumerate}
\usepackage{verbatim}
\usepackage{cite}
\usepackage{tikz}
\usetikzlibrary{matrix,arrows}
\usepackage[justification=raggedright]{caption}
\usepackage{subcaption}
\usepackage[ruled,procnumbered]{algorithm2e}
\usepackage{graphicx}
\usepackage{booktabs}

\numberwithin{equation}{section}


\makeatletter
\def\hlinewd#1{%
\noalign{\ifnum0=`}\fi\hrule \@height #1 %
\futurelet\reserved@a\@xhline}
\makeatother


\def \vec#1{{\bf{#1}}}
\def\pd#1#2{\frac{\partial #1}{\partial #2}}

\newcommand{\bi}{\begin{itemize}}
\newcommand{\ei}{\end{itemize}}

\newcommand{\diverg}{\vec{\nabla}\cdot}
\newcommand{\director}{\vec{n}}
\newcommand{\curl}{\vec{\nabla}\times}
\newcommand{\Ltwoinner}[3]{\langle #1,#2 \rangle_0}
\newcommand{\Ltwonorm}[2]{\Vert #1 \Vert_0}
\newcommand{\Ltwonormndim}[3]{\Vert #1 \Vert_0}
\newcommand{\Ltwoinnerndim}[4]{\langle #1,#2 \rangle_0}

\newcommand{\diff}[1]{\, d#1}

\newcommand{\ltwonorm}[1]{\vert #1 \vert}
\newcommand{\ltwoinner}[2]{( #1, #2 )}
\newcommand{\kdirector}{\vec{n}_k}
\newcommand{\ddirector}{\delta \director}
\newcommand{\dlambda}{\delta \lambda}
\newcommand{\klambda}{\lambda_k}
\DeclareMathOperator*{\argmin}{argmin}

\newcommand{\lagdivn}{\mathcal{L}_{\director}[\vec{v}]}
\newcommand{\lagdivlam}{\mathcal{L}_{\lambda}[\gamma]}

\newcommand{\Hdc}{\mathcal{H}^{DC}{(\Omega)}}
\newcommand{\Hdcnot}{\mathcal{H}^{DC}_0{(\Omega)}}

\newcommand{\Honenot}[1]{H^1_0({#1})}

\newcommand{\Ltwo}[1]{L^2(#1)}

\newcommand{\Lp}[1]{L^p (\Omega)}
\newcommand{\Hcurl}[1]{H(\text{curl},#1)}
\newcommand{\Hdiv}[1]{H(\text{div},#1)}

\newcommand{\Linfinity}[1]{L^{\infty}(\Omega)}

\newcommand{\Hdcnorm}[2]{\Vert #1 \Vert_{DC}}

\newcounter{casenum}
\newenvironment{caseof}{\setcounter{casenum}{1}}{}
\newcommand{\case}[2]{\vskip.5\baselineskip\par\noindent {\bfseries Case \arabic{casenum}.} #1 \\ #2\addtocounter{casenum}{1}}

\newtheorem{assumption}[theorem]{Assumption}

\title{Constrained Optimization for Liquid Crystal Equilibria: Extended Results}


\author{J. H. Adler, D. B. Emerson, S. P. MacLachlan, T. A. Manteuffel}

\begin{document}

\maketitle

\begin{abstract}
This paper investigates energy-minimization finite-element approaches for the computation of nematic liquid crystal equilibrium configurations. We compare the performance of these methods when the necessary unit-length constraint is enforced by either continuous Lagrange multipliers or a penalty functional. Building on previous work in \cite{Emerson1, Emerson2}, the penalty method is derived and the linearizations within the nonlinear iteration are shown to be well-posed under certain assumptions. In addition, the paper discusses the effects of tailored trust-region methods and nested iteration for both formulations. Such methods are aimed at increasing the efficiency and robustness of each algorithms' nonlinear iterations. Three representative, free-elastic, equilibrium problems are considered to examine each method's performance. The first two configurations have analytical solutions and, therefore, convergence to the true solution is considered. The third problem considers more complicated boundary conditions, relevant in ongoing research, simulating surface nano-patterning. A multigrid approach is introduced and tested for a flexoelectrically coupled model to establish scalability for highly complicated applications. The Lagrange multiplier method is found to outperform the penalty method in a number of measures, trust regions are shown to improve robustness, and nested iteration proves highly effective at reducing computational costs.
 \end{abstract}
 
 \begin{keywords}
nematic liquid crystals, finite-element methods, Newton linearization, energy optimization, penalty methods, trust regions, multigrid methods.
\end{keywords}

\begin{AMS}
76A15, 65N30, 49M15, 65N22, 90C30, 65K10, 65N55
\end{AMS}
 
 \pagestyle{myheadings}
\thispagestyle{plain}
\markboth{\sc Adler, Emerson, Maclachlan, Manteuffel}{\sc Nematic Liquid Crystal Constrained Optimization}

\section{Introduction}

Liquid crystals are substances that possess mesophases with characteristics spanning those of isotropic liquids and solid crystals. That is, liquid crystals are fluid, yet exhibit long-range structured ordering. This paper considers nematic liquid crystals which consist of rod-like molecules whose average pointwise orientation is represented by a vector, $\director(x,y,z) = (n_1, n_2, n_3)^T$. This orientation vector is known as the director and is assumed to be headless for nematics. Therefore, $\director$ and $-\director$ are indistinguishable at any point in the domain $\Omega$, due to molecular symmetry. An important constraint on the director vector field is that $\director$ remain of unit length pointwise throughout $\Omega$. Thorough overviews of liquid crystal physics are found in \cite{Stewart1, deGennes1, Chandrasekhar1}.

The deformable ordering of liquid crystal structures, coupled with the materials' birefringent and dielectric properties, has led to many important applications and discoveries, most famously in display technologies. Additional modern applications include nanoparticle organization \cite{Wan1}, photorefractive cells \cite{Daly1}, and liquid crystal elastomers designed to produce effective actuator devices such as light driven motors \cite{Yamada1} and artificial muscles \cite{Thomsen1}. Numerical simulations of liquid crystal equilibrium configurations are used to optimize the performance of devices, analyze experiments, validate theory, and suggest the presence of new physical phenomenon \cite{Atherton1, Emerson2}. Many current technologies and experiments, including bistable devices \cite{Davidson1, Majumdar1}, require simulations with anisotropic physical constants on two- and three-dimensional domains.

To this end, a theoretically supported energy-minimization finite-element approach using Newton linearization and a Lagrange multiplier for the pointwise constraint was developed in \cite{Emerson1, Emerson2}. The approach effectively enforces the unit-length constraint while converging to energy-minimizing configurations. However, alternative approaches to efficiently impose unit-length conformance exist, such as the renormalized Newton method presented in \cite{Ramage3}. Penalty methods have also been applied to liquid crystal equilibrium problems \cite{Atherton1, Glowinski1, Hu1} and are utilized extensively to simplify the Leslie-Ericksen equations \cite{Ericksen1, Leslie1} in nematohydrodynamics simulations \cite{Liu2, Liu4, Liu5}. In addition, penalty methods are used for unit-length constraints in certain ferromagnetic problems \cite{Kruzik1}. 

In this paper, we focus on the Frank-Oseen free-elastic energy model and aim to compare the performance of techniques enforcing the unit-length constraint via Lagrange multipliers or with penalty methods employing augmentations to the free-elastic energy functional. The energy model and approaches are discussed in Section \ref{energymodels}. In Section \ref{penaltywellposedness}, well-posedness for the intermediate Newton linearizations that arise in the penalty method formulation is established. In addition to the incomplete Newton stepping discussed in \cite{Emerson1}, several tailored trust-region methods are investigated in Section \ref{TrustRegionMethods}. These trust-region approaches include one- and two-dimensional subspace minimization techniques \cite{Byrd1, Byrd2, Nocedal1}. A modified penalty method, which normalizes the director after each step, is also introduced in this section. The resulting algorithms are tested on three benchmark free-elastic problems in Section \ref{nummethodology}. Two of the problems have analytical solutions and one simulates nano-patterned boundary conditions relevant in ongoing research \cite{Atherton1, Atherton2}. In each of the experiments conducted, the Lagrange multiplier method outperforms the penalty approaches in a number of measures. Moreover, trust-regions are shown to improve convergence robustness and nested iteration proves exceptionally effective at reducing overall computational costs. In Section \ref{BraessSarazinSmoothing}, a Braess-Sarazin-type multigrid scheme \cite{Braess2, Benson1} is introduced for a flexoelectrically coupled problem with nano-patterned boundary conditions, which demonstrates scalability for highly complicated models with coupled physics. Finally, Section \ref{conclusion} gives some concluding remarks, and future work is discussed.

\section{Energy Model and Minimization} \label{energymodels}

While a number of free-elastic energy models exist \cite{Davis1, Frank1, Stewart1}, we consider the Frank-Oseen free-elastic energy herein \cite{Virga1, Stewart1}. The equilibrium, free-elastic energy is represented by an integral functional depending on deformations of the director field, $\director$. Liquid crystal samples favor stable configurations attaining minimal free energy. As in \cite{Emerson1, Emerson2}, let $K_i$, $i=1,2,3$, be the Frank constants \cite{Frank1} with $K_i \geq 0$ by Ericksen's inequalities \cite{Ericksen2}, and let
\begin{equation*} \label{matrixD}
\vec{Z} = \kappa \director \otimes \director + (\vec{I} - \director \otimes \director) = \vec{I} - (1-\kappa) \director \otimes \director,
\end{equation*}
where $\kappa = K_2/K_3$. In general, we consider the case that  $K_2, K_3 \neq 0$. Denote the classical $\Ltwo{\Omega}$ inner product and norm as $\Ltwoinner{\cdot}{\cdot}{\Omega}$ and $\Ltwonorm{\cdot}{\Omega}$, respectively, and the standard Euclidean inner product and norm as $(\cdot, \cdot)$ and $\vert \cdot \vert$. Throughout this paper, we assume the presence of Dirichlet boundary conditions or mixed Dirichlet and periodic boundary on a rectangular domain and, therefore, utilize the null Lagrangian simplification discussed in \cite{Emerson1, Emerson2, Stewart1}. Thus, the Frank-Oseen free-elastic energy for a domain, $\Omega$, is written
\begin{align} \label{FrankOseenFree}
\int_{\Omega} w_F \diff{V} =& \frac{1}{2}K_1 \Ltwonorm{\diverg \director}{\Omega}^2 +\frac{1}{2}K_3\Ltwoinnerndim{\vec{Z} \curl \director}{\curl \director}{\Omega}{3}.
\end{align}
Note that if $\kappa =1$, $\vec{Z}$ is reduced to the identity and the energy becomes a Div-Curl system. The Frank constants may be determined experimentally for different liquid crystal types, are often anisotropic (i.e. $K_1 \neq K_2 \neq K_3$), and may depend on temperature \cite{deGennes1, Haller1}.

The one-constant approximation such that $K_1 = K_2 = K_3$, discussed in \cite{Emerson1, Emerson2, Stewart1}, is a widely applied simplification \cite{Ramage1, Liu1, Liu4, Stewart1, Cohen1} of \eqref{FrankOseenFree}. This simplification significantly reduces the complexity of the free-elastic energy functional for both theoretical analysis and computational simulation. However, it ignores anisotropic physical characteristics, which play major roles in liquid crystal phenomena \cite{Lee1, Atherton2}. Therefore, both the penalty and Lagrange multiplier methods considered here do not rely on such an assumption.

\subsection{Penalty and Lagrange Multiplier Energy Minimization} \label{MethodsDiscussion}

The admissible equilibrium state for a liquid crystal sample is one that minimizes the system free energy in \eqref{FrankOseenFree}, subject to the local constraint $\ltwoinner{\director}{\director} = 1$. Here, we discuss two energy-minimization approaches, imposing this constraint via a penalty method or with Lagrange multipliers. To compute the free-energy minimizing configurations, we define the functional
\begin{align} \label{functional2}
\mathcal{F}(\director) &= K_1 \Ltwonorm{\diverg \director}{\Omega}^2 + K_3\Ltwoinnerndim{\vec{Z} \curl \director}{\curl \director}{\Omega}{3}.
\end{align}
Throughout this paper, we will make use of the spaces 
\begin{align*}
\Hdiv{\Omega} &= \{\vec{v} \in L^2(\Omega)^3 : \diverg \vec{v} \in L^2(\Omega) \},\\
\Hcurl{\Omega} &= \{ \vec{v} \in L^2(\Omega)^3 : \curl \vec{v} \in L^2(\Omega)^3 \},
\end{align*}
as well as
\begin{equation*}
\Hdc= \{ \vec{v} \in \Hdiv{\Omega} \cap \Hcurl{\Omega} : B(\vec{v}) = \vec{g} \},
\end{equation*}
with norm $\Hdcnorm{\vec{v}}{\Omega}^2 = \Ltwonormndim{\vec{v}}{\Omega}{3}^2 + \Ltwonorm{\diverg \vec{v}}{\Omega}^2 + \Ltwonormndim{\curl \vec{v}}{\Omega}{3}^2$ and appropriate boundary conditions $B(\vec{v})=\vec{g}$. Here, we assume that $\vec{g}$ satisfies appropriate compatibility conditions for the operator $B$. For example, if $B$ represents full Dirichlet boundary conditions and $\Omega$ has a Lipschitz continuous boundary, it is assumed that $\vec{g} \in H^{\frac{1}{2}}(\partial \Omega)^3$ \cite{Girault1}. Furthermore, let $\Hdcnot = \{ \vec{v} \in \Hdiv{\Omega} \cap \Hcurl{\Omega} : B(\vec{v}) = \vec{0} \}$. Note that if $\Omega$ is a Lipshitz domain and $B$ imposes full Dirichlet boundary conditions on all components of $\vec{v}$, then $\Hdcnot = \Honenot{\Omega}^3$ \cite[Lemma 2.5]{Girault1}.

\subsection{Lagrange Multiplier Formulation} \label{LagrangeMultiplierApproach}

For this approach, the pointwise unit-length constraint is imposed by a continuous Lagrange multiplier. Following \cite{Emerson1, Emerson2}, the Lagrangian is defined as
\begin{align*}
\mathcal{L}(\director, \lambda) &= \mathcal{F}(\director) + \int_{\Omega} \lambda(\vec{x})(\ltwoinner{\director}{\director}-1) \diff{V},
\end{align*}
where $\lambda \in \Ltwo{\Omega}$. First-order optimality conditions, given by
\begin{align*}
\lagdivn &= \frac{\partial}{\partial \director} \mathcal{L}(\director, \lambda) [\vec{v}] =0, & & \forall \vec{v} \in \Hdcnot, \\
\lagdivlam &= \frac{\partial}{\partial \lambda} \mathcal{L}(\director, \lambda) [\gamma] =0,& & \forall \gamma \in L^2(\Omega),
\end{align*}
are derived and linearized to yield the Newton update equations
\begin{equation} \label{newtonhessian}
\left [ \begin{array}{c c}
\mathcal{L}_{\director \director} &  \mathcal{L}_{\director \lambda} \\
\mathcal{L}_{\lambda \director} & \vec{0}
\end{array} \right ]
\left [ \begin{array}{c} 
\ddirector \\
\dlambda
\end{array} \right]
= - \left[ \begin{array}{c}
\mathcal{L}_{\director} \\
\mathcal{L}_{\lambda} 
\end{array} \right],
\end{equation}
where each of the system components are evaluated at the current approximations $\kdirector$ and $\klambda$, while $\ddirector= \director_{k+1} - \kdirector$ and $\dlambda = \lambda_{k+1}-\klambda$ are the desired updates to these approximations. The matrix-vector multiplication indicates the direction that the derivatives in the Hessian are taken. That is,
\begin{align*}
\mathcal{L}_{\director \director}[\vec{v}] \cdot \ddirector = \pd{ }{\director} \left ( \mathcal{L}_{\director} (\kdirector, \klambda)[\vec{v}]\right)[\ddirector], & &
\mathcal{L}_{\director \lambda}[\vec{v}] \cdot \dlambda = \pd{ }{\lambda} \left( \mathcal{L}_{\director} (\kdirector, \klambda)[\vec{v}] \right ) [\dlambda], \\
\mathcal{L}_{\lambda \director}[\gamma] \cdot \ddirector = \pd{ }{\director} \left( \mathcal{L}_{\lambda} (\kdirector, \klambda)[\gamma] \right)[\ddirector], & &
\end{align*}
where the partials denote G\^{a}teaux derivatives in the respective variables. The above system represents a linearized variational system for which we seek solutions $\ddirector$ and $\dlambda$. The complete system is found in \cite{Emerson1}.

\subsection{Penalty Method Formulation}

In order to define the penalty approach, the free-energy functional in \eqref{functional2} is augmented with a weighted, positive term,
\begin{align}
\mathcal{P}(\director) &= K_1\Ltwoinner{\diverg \director}{\diverg \director}{\Omega} + K_3\Ltwoinnerndim{\vec{Z} \curl \director}{\curl \director}{\Omega}{3} + \zeta \Ltwoinner{\director \cdot \director -1}{\director \cdot \director -1}{\Omega} \label{Dirichletpenaltyfunctional},
\end{align}
where $\zeta > 0$ represents a constant weight, penalizing deviations of the solution from the unit-length constraint. Thus, in the limit of large $\zeta$ values, unconstrained minimization of \eqref{Dirichletpenaltyfunctional} is equivalent to the constrained minimization of \eqref{functional2}. In order to minimize $\mathcal{P}(\director)$, we compute the G\^{a}teaux derivative of $\mathcal{P}(\director)$ with respect to $\director$ in the direction $\vec{v} \in \Hdcnot$. Hence, the first-order optimality condition is
\begin{align*}
\mathcal{P}_{\director}[\vec{v}] = \pd{}{\director}\mathcal{P}(\director)[\vec{v}] = 0, & & \forall \vec{v} \in \Hdcnot.
\end{align*}
Computation of this derivative yields the variational problem
\begin{align*}
\mathcal{P}_{\director}[\vec{v}] &= 2K_1\Ltwoinner{\diverg \director}{\diverg \vec{v}}{\Omega} + 2K_3\Ltwoinnerndim{\vec{Z} \curl \director}{\curl \vec{v}}{\Omega}{3} \nonumber  \\
& \qquad+ 2(K_2-K_3)\Ltwoinner{\director \cdot \curl \director}{\vec{v} \cdot \curl \director}{\Omega} + 4 \zeta \Ltwoinner{\vec{v} \cdot \director}{\director \cdot \director -1}{\Omega} = 0,
\end{align*}
for all $\vec{v} \in \Hdcnot$.

As with the Lagrangian formulation, the variational problem above contains nonlinearities. Therefore, Newton iterations are again applied, requiring computation of the second-order G\^{a}teaux derivative with respect to $\director$. Let $\kdirector$ be the current approximation for $\director$ and $\ddirector= \director_{k+1} - \kdirector$ be the update that we seek to compute. Then, the Newton linearizations are written
\begin{align} \label{penaltylinearization}
\pd{}{\director} \left( \mathcal{P}_{\director}(\kdirector)[\vec{v}] \right)[\ddirector] = - \mathcal{P}_{\director}(\kdirector)[\vec{v}], & & \forall \vec{v} \in \Hdcnot, 
\end{align}
where
\begin{align}
\pd{}{\director} \left( \mathcal{P}_{\director}(\kdirector)[\vec{v}] \right)[\ddirector]  &= 2K_1\Ltwoinner{\diverg \ddirector}{\diverg \vec{v}}{\Omega} + 2K_3 \Ltwoinnerndim{\vec{Z}(\kdirector) \curl \ddirector}{\curl \vec{v}}{\Omega}{3} \nonumber \\
& \qquad + 2(K_2-K_3) \Big(\Ltwoinner{\ddirector \cdot \curl \vec{v}}{\kdirector \cdot \curl \kdirector}{\Omega} \nonumber \\
& \qquad +\Ltwoinner{\kdirector \cdot \curl \vec{v}}{\ddirector \cdot \curl \kdirector}{\Omega} + \Ltwoinner{\kdirector \cdot \curl \kdirector}{\vec{v} \cdot \curl \ddirector}{\Omega} \nonumber \\
& \qquad + \Ltwoinner{\kdirector \cdot \curl \ddirector}{\vec{v} \cdot \curl \kdirector}{\Omega} + \Ltwoinner{\ddirector \cdot \curl \kdirector}{\vec{v} \cdot \curl \kdirector}{\Omega}\Big) \nonumber \\
& \qquad + 4\zeta \big( \Ltwoinner{\kdirector \cdot \kdirector -1}{\vec{v} \cdot \ddirector}{\Omega} + 2 \Ltwoinner{\ddirector \cdot \kdirector}{\vec{v} \cdot \kdirector}{\Omega} \big). \label{penaltysecondderivative}
\end{align}
Completing \eqref{penaltylinearization} with the above second-order derivative computation yields a linearized variational system. For each iteration, we compute $\ddirector$ satisfying \eqref{penaltylinearization} for all $\vec{v} \in \Hdcnot$ with the current approximation $\kdirector$.

\section{Well-Posedness of the Penalty Newton Linearizations} \label{penaltywellposedness}

Existence and uniqueness of solutions to the discrete form of the linearization systems described in \eqref{newtonhessian} is established in \cite{Emerson1} under reasonable assumptions. Here, we adapt these results to the discrete form of the penalty method linearizations in Equation \eqref{penaltylinearization}. 

Let $a(\ddirector, \vec{v})$ denote the bilinear form defined in \eqref{penaltysecondderivative} for fixed $\kdirector$ and $F(\vec{v})$ be the linear functional on the right-hand side of the linearization in \eqref{penaltylinearization}. Using finite elements to approximate the desired update, $\ddirector$, and considering a discrete space $V_h \subset \Hdcnot$ yields the discrete linearized system,
\begin{align}
a(\ddirector_h, \vec{v}_h) = F(\vec{v}_h), \qquad \forall \vec{v}_h \in V_h. \label{auvform}
\end{align}

Throughout the rest of this section, the developed theory applies exclusively to discrete spaces. Therefore, except when necessary for clarity, we drop the subscript $h$ along with the notation, $\ddirector$. For instance, we write $a(\vec{u}, \vec{v})$ to indicate the bilinear form in \eqref{auvform} operating on the discrete space $V_h \times V_h$. Furthermore, we refer to the following set of assumptions. 
\begin{assumption} \label{secass2}
Consider an open bounded domain, $\Omega$, with a Lipschitz-continuous boundary. Further, assume that there exist constants $0 < \alpha \leq 1 \leq \beta$, such that $\alpha \leq \ltwonorm{\kdirector}^2 \leq \beta$ and $\vec{Z}(\kdirector(\vec{x}))$ remains uniformly symmetric positive definite (USPD) with lower and upper bounds on its Rayleigh quotient, $\eta$ and $\Lambda$, respectively, as in \cite[Lemma 2.1]{Emerson1}. Finally, assume that Dirichlet boundary conditions are applied.
\end{assumption}\\
While the assumption above, and the theory below, explicitly concern full Dirichlet boundary conditions, the theory is equally applicable to mixed Dirichlet and periodic boundary conditions on a rectangular domain.

In order to establish well-posedness of \eqref{auvform}, we show that the functional, $F(\vec{v})$, is continuous and that the bilinear form, $a(\vec{u}, \vec{v})$, is continuous and coercive. Decomposing the bilinear form, $a(\vec{u},\vec{v})$, and the linear form, $F(\vec{v})$, into terms that contain the penalty term and those that do not,
\begin{align*}
a(\vec{u},\vec{v}) = & ~\hat{a}(\vec{u},\vec{v}) + 2\zeta \big( \Ltwoinner{\kdirector \cdot
  \kdirector -1}{\vec{v} \cdot \vec{u}}{\Omega} + 2
\Ltwoinner{\vec{u} \cdot \kdirector}{\vec{v} \cdot
  \kdirector}{\Omega} \big)\\
F(\vec{v}) = & ~\hat{F}(\vec{v}) + 2 \zeta \Ltwoinner{\vec{v} \cdot
  \kdirector}{\kdirector \cdot \kdirector -1}{\Omega},
\end{align*} 
we then extend the results in \cite{Emerson1}.
\begin{lemma} \label{boundedlinearform}
Under Assumption \ref{secass2}, $F$ is a bounded linear functional on $V_h$.
\end{lemma}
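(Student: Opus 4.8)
The plan is to show $F(\vec{v}) = \hat{F}(\vec{v}) + 2\zeta \Ltwoinner{\vec{v} \cdot \kdirector}{\kdirector \cdot \kdirector - 1}{\Omega}$ is bounded on $V_h$ by bounding each summand separately and using the triangle inequality. Since $V_h \subset \Hdcnot$ carries the norm $\Hdcnorm{\cdot}{\Omega}$, I want a constant $C$ with $|F(\vec{v})| \leq C \Hdcnorm{\vec{v}}{\Omega}$ for all $\vec{v} \in V_h$. The term $\hat{F}(\vec{v})$ is exactly the right-hand side functional analyzed in \cite{Emerson1}, so its boundedness on the discrete space follows immediately from that reference; the only genuinely new piece is the penalty contribution.

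For the penalty term, I would first use the Cauchy--Schwarz inequality in $\Ltwo{\Omega}$ to get
\begin{align*}
\big| 2\zeta \Ltwoinner{\vec{v} \cdot \kdirector}{\kdirector \cdot \kdirector - 1}{\Omega} \big| \leq 2\zeta \, \Ltwonorm{\vec{v} \cdot \kdirector}{\Omega} \, \Ltwonorm{\kdirector \cdot \kdirector - 1}{\Omega}.
\end{align*}
Then I would bound the two factors using Assumption \ref{secass2}. Pointwise, $|\vec{v} \cdot \kdirector| \leq |\vec{v}|\,|\kdirector| \leq \sqrt{\beta}\,|\vec{v}|$ by Cauchy--Schwarz in $\Rthree$ and the bound $\ltwonorm{\kdirector}^2 \leq \beta$, so $\Ltwonorm{\vec{v} \cdot \kdirector}{\Omega} \leq \sqrt{\beta}\, \Ltwonormndim{\vec{v}}{\Omega}{3} \leq \sqrt{\beta}\, \Hdcnorm{\vec{v}}{\Omega}$. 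For the other factor, $|\kdirector \cdot \kdirector - 1| \leq \max\{\beta - 1, 1 - \alpha\} \leq \beta$ pointwise, whence $\Ltwonorm{\kdirector \cdot \kdirector - 1}{\Omega} \leq \beta \, |\Omega|^{1/2}$, which is finite since $\Omega$ is bounded. Combining, the penalty term is bounded by $2\zeta \beta^{3/2} |\Omega|^{1/2} \Hdcnorm{\vec{v}}{\Omega}$, and adding the bound on $\hat{F}$ from \cite{Emerson1} gives the result with an explicit constant.

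Linearity of $F$ is immediate: $\hat F$ is linear by the reference, and $\vec{v} \mapsto \Ltwoinner{\vec{v}\cdot\kdirector}{\kdirector\cdot\kdirector-1}{\Omega}$ is linear in $\vec{v}$ since $\kdirector$ is fixed. There is no real obstacle here; the main point to be careful about is that all estimates must be phrased on the discrete space $V_h$ with its $\Hdcnorm{\cdot}{\Omega}$ norm (so that the subsequent coercivity/continuity arguments for $a(\cdot,\cdot)$ and the Lax--Milgram conclusion are consistent), and that the boundedness of $\kdirector$ supplied by Assumption \ref{secass2} — rather than any regularity beyond $L^\infty$ — is exactly what is needed to control both factors of the penalty term.
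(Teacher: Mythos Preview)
Your proposal is correct and follows essentially the same route as the paper: split $F$ into $\hat{F}$ (handled by \cite{Emerson1}) plus the penalty term, apply Cauchy--Schwarz, and bound the two resulting $L^2$ factors pointwise using Assumption~\ref{secass2}. The only cosmetic difference is that the paper records the sharper constant $C_\mu = \max(1-\alpha,\beta-1)$ for $\Ltwonorm{\kdirector\cdot\kdirector-1}{\Omega}$, whereas you further bound this by $\beta$; both yield the result.
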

\begin{proof}
From the bounds derived in \cite[Lemma 3.6]{Emerson1} and an application of the Cauchy-Schwarz inequality,
\begin{align*}
\vert F(\vec{v}) \vert & \leq \vert \hat{F}(\vec{v}) \vert + 2 \zeta
\Ltwonorm{\kdirector \cdot \kdirector -1}{\Omega}\Ltwonorm{\kdirector
  \cdot \vec{v}}{\Omega} \\
  &\leq C_F \Hdcnorm{\vec{v}}{\Omega} + 2 \zeta \Ltwonorm{\kdirector \cdot \kdirector -1}{\Omega}\Ltwonorm{\kdirector \cdot \vec{v}}{\Omega},
\end{align*}
where $C_F$ is a constant independent of mesh size, defined in \cite{Emerson1}. Note that by assumption $\alpha \leq \kdirector \cdot \kdirector \leq \beta$, where $0 < \alpha \leq 1 \leq \beta$. Then, letting $C_{\mu} = \max (1-\alpha, \beta -1)$,
\begin{align*}
\Ltwonorm{\kdirector \cdot \kdirector -1}{\Omega}^2 &= \int_{\Omega} (\kdirector \cdot \kdirector -1)^2 \diff{V} \leq C_{\mu}^2 \int_{\Omega} \diff{V} = C_{\mu}^2 \vert \Omega \vert.
\end{align*}
Hence, $\Ltwonorm{\kdirector \cdot \kdirector -1}{\Omega} \leq C_{\mu} \vert \Omega \vert^{\frac{1}{2}}$. In addition,
\begin{align*}
\Ltwonorm{\kdirector \cdot \vec{v}}{\Omega} &\leq \sqrt{\beta} \Ltwonormndim{\vec{v}}{\Omega}{3} \leq \sqrt{\beta} \Hdcnorm{\vec{v}}{\Omega}.
\end{align*}
Thus,
\begin{align*}
\vert F(\vec{v}) \vert &\leq C_F \Hdcnorm{\vec{v}}{\Omega} + 2 \zeta C_{\mu} \vert \Omega \vert^{\frac{1}{2}} \sqrt{\beta} \Hdcnorm{\vec{v}}{\Omega}.
\end{align*}
\end{proof}
\begin{lemma} \label{boundedbilinearform}
Under Assumption \ref{secass2}, $a(\vec{u}, \vec{v})$ is continuous.
\end{lemma}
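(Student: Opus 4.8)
The plan is to bound $\lvert a(\vec{u},\vec{v})\rvert$ by $C\,\Hdcnorm{\vec{u}}{\Omega}\Hdcnorm{\vec{v}}{\Omega}$ with a constant $C$ independent of the mesh, using the same decomposition $a = \hat a + 2\zeta\big(\Ltwoinner{\kdirector\cdot\kdirector-1}{\vec{v}\cdot\vec{u}}{\Omega} + 2\Ltwoinner{\vec{u}\cdot\kdirector}{\vec{v}\cdot\kdirector}{\Omega}\big)$ already introduced. First I would invoke the continuity of $\hat a$, which differs from the bilinear form analyzed in \cite[Lemma 3.6]{Emerson1} only through fixed-coefficient terms involving $\vec{Z}(\kdirector)$ and $\kdirector\cdot\curl\kdirector$; since Assumption \ref{secass2} supplies the USPD bounds $\eta,\Lambda$ on $\vec{Z}(\kdirector)$ and the bound $\lvert\kdirector\rvert^2\le\beta$, each of the $K_1$, $K_3$, and $(K_2-K_3)$ terms in \eqref{penaltysecondderivative} is handled exactly as there, giving $\lvert\hat a(\vec{u},\vec{v})\rvert \le C_{\hat a}\,\Hdcnorm{\vec{u}}{\Omega}\Hdcnorm{\vec{v}}{\Omega}$.

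Next I would treat the two penalty contributions. For the first, by Cauchy--Schwarz in $\Ltwo{\Omega}$ and then pointwise, $\lvert\Ltwoinner{\kdirector\cdot\kdirector-1}{\vec{v}\cdot\vec{u}}{\Omega}\rvert \le \Linfinitynorm{\kdirector\cdot\kdirector-1}{\Omega}\,\Ltwonormndim{\vec{u}}{\Omega}{3}\Ltwonormndim{\vec{v}}{\Omega}{3} \le C_\mu\,\Ltwonormndim{\vec{u}}{\Omega}{3}\Ltwonormndim{\vec{v}}{\Omega}{3}$, where $C_\mu = \max(1-\alpha,\beta-1)$ as in Lemma \ref{boundedlinearform}, and each $\Ltwo{\Omega}$ norm is dominated by the full $\Hdcnorm{\cdot}{\Omega}$ norm. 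For the second, Cauchy--Schwarz followed by the pointwise estimate $\lvert\kdirector\cdot\vec{w}\rvert\le\sqrt{\beta}\,\lvert\vec{w}\rvert$ (used already in Lemma \ref{boundedlinearform}) gives $\lvert\Ltwoinner{\vec{u}\cdot\kdirector}{\vec{v}\cdot\kdirector}{\Omega}\rvert \le \beta\,\Ltwonormndim{\vec{u}}{\Omega}{3}\Ltwonormndim{\vec{v}}{\Omega}{3} \le \beta\,\Hdcnorm{\vec{u}}{\Omega}\Hdcnorm{\vec{v}}{\Omega}$. Collecting terms yields $\lvert a(\vec{u},\vec{v})\rvert \le \big(C_{\hat a} + 2\zeta C_\mu\lvert\Omega\rvert^{0} + 4\zeta\beta\big)\Hdcnorm{\vec{u}}{\Omega}\Hdcnorm{\vec{v}}{\Omega}$, i.e. continuity with a mesh-independent constant (note the first penalty term needs no $\lvert\Omega\rvert$ factor here because we pull out the $L^\infty$ norm rather than the $L^2$ norm of $\kdirector\cdot\kdirector-1$).

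There is no serious obstacle: the only mild subtlety is bookkeeping in the $(K_2-K_3)$ cross terms of \eqref{penaltysecondderivative}, where factors such as $\kdirector\cdot\curl\kdirector$ and $\kdirector\cdot\curl\vec{v}$ must be bounded by peeling off $\lvert\kdirector\rvert\le\sqrt{\beta}$ pointwise and assigning the remaining curl factors to $\Ltwonormndim{\curl\vec{u}}{\Omega}{3}$ or $\Ltwonormndim{\curl\vec{v}}{\Omega}{3}$; each such term then contributes a constant multiple of $\sqrt{\beta}\lvert K_2-K_3\rvert\,\Hdcnorm{\vec{u}}{\Omega}\Hdcnorm{\vec{v}}{\Omega}$, mirroring \cite[Lemma 3.6]{Emerson1}. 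Since every bound is pointwise or via Cauchy--Schwarz with constants depending only on $\alpha,\beta,\eta,\Lambda,K_i,\zeta,\lvert\Omega\rvert$ — not on $h$ — continuity of $a(\vec{u},\vec{v})$ on $V_h\times V_h$ follows, and by the usual argument this also shows $a$ is continuous on all of $\Hdcnot$.
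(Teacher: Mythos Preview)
Your proof is correct and follows essentially the same route as the paper: decompose $a=\hat a+\text{penalty terms}$, invoke continuity of $\hat a$ from \cite{Emerson1}, and bound each penalty contribution via Cauchy--Schwarz together with the pointwise estimates $\lvert\kdirector\cdot\kdirector-1\rvert\le C_\mu$ and $\lvert\kdirector\rvert^2\le\beta$, arriving at the same final constant $C_A+2\zeta(C_\mu+2\beta)$. Two cosmetic remarks: the paper cites \cite[Lemma~3.7]{Emerson1} (not 3.6) for the continuity of $\hat a$, and where you pull out $\Linfinitynorm{\kdirector\cdot\kdirector-1}{\Omega}$ the paper instead absorbs the factor into $\vec{u}$ and applies Cauchy--Schwarz to $\Ltwoinnerndim{(\kdirector\cdot\kdirector-1)\vec{u}}{\vec{v}}{\Omega}{3}$---a trivially equivalent step yielding the identical bound.
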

\begin{proof}
Using the bounds derived in \cite[Lemma 3.7]{Emerson1},
\begin{align*}
a(\vec{u},\vec{v}) &\leq \hat{a}(\vec{u},\vec{v}) + 2\zeta \big( \vert
\Ltwoinner{\kdirector \cdot \kdirector - 1}{\vec{u} \cdot
  \vec{v}}{\Omega} \vert + 2 \vert \Ltwoinner{\vec{u} \cdot
  \kdirector}{\vec{v} \cdot \kdirector}{\Omega} \vert \big)\\
&\leq C_A \Hdcnorm{\vec{u}}{\Omega} \Hdcnorm{\vec{v}}{\Omega} + 2\zeta \big( \vert \Ltwoinner{\kdirector \cdot \kdirector - 1}{\vec{u} \cdot \vec{v}}{\Omega} \vert + 2 \vert \Ltwoinner{\vec{u} \cdot \kdirector}{\vec{v} \cdot \kdirector}{\Omega} \vert \big),
\end{align*}
where $C_A$ is the continuity constant defined in \cite{Emerson1}. Note that,
\begin{align*}
\vert \Ltwoinner{\kdirector \cdot \kdirector - 1}{\vec{u} \cdot \vec{v}}{\Omega} \vert &= \vert \Ltwoinnerndim{(\kdirector \cdot \kdirector - 1)\vec{u}}{\vec{v}}{\Omega}{3} \vert \leq \Ltwonormndim{(\kdirector \cdot \kdirector -1) \vec{u}}{\Omega}{3} \Ltwonormndim{\vec{v}}{\Omega}{3}.
\end{align*}
Furthermore,
\begin{align*}
\Ltwonormndim{(\kdirector \cdot \kdirector -1) \vec{u}}{\Omega}{3}^2 &= \int_{\Omega} (\kdirector \cdot \kdirector -1)^2 (\vec{u} \cdot \vec{u}) \diff{V} \leq C_{\mu}^2 \Ltwonormndim{\vec{u}}{\Omega}{3}^2.
\end{align*}
This implies that
\begin{equation*}
\Ltwonormndim{(\kdirector \cdot \kdirector -1) \vec{u}}{\Omega}{3} \leq C_{\mu} \Ltwonormndim{\vec{u}}{\Omega}{3},
\end{equation*}
and 
\begin{equation*}
\vert \Ltwoinner{\kdirector \cdot \kdirector - 1}{\vec{u} \cdot \vec{v}}{\Omega} \vert \leq C_{\mu} \Hdcnorm{\vec{u}}{\Omega} \Hdcnorm{\vec{v}}{\Omega}.
\end{equation*}
Noting that
\begin{equation*}
\vert \Ltwoinner{\vec{u} \cdot \kdirector}{\vec{v} \cdot \kdirector}{\Omega} \vert \leq \beta \Hdcnorm{\vec{u}}{\Omega} \Hdcnorm{\vec{v}}{\Omega},
\end{equation*}
we bound
\begin{align*}
a(\vec{u},\vec{v}) &\leq \Big(C_A + 2 \zeta \big(C_{\mu} + 2\beta \big) \Big) \Hdcnorm{\vec{u}}{\Omega} \Hdcnorm{\vec{v}}{\Omega}.
\end{align*}
\end{proof}

\noindent Following the theory established in \cite{Emerson1}, two coercivity lemmas for $a(\vec{u}, \vec{v})$ are proved. The first proof addresses the case when $\kappa = 1$. The second considers coercivity when $\kappa$ lies in a neighborhood of unity, $\kappa \in (1 - \epsilon_2, 1 + \epsilon_1)$. Let $\alpha_0 > 0$ be the coercivity constant from \cite[Lemma 3.7]{Emerson1}.

\begin{lemma} \label{coercivityauv}
Under Assumption \ref{secass2},  if $\kappa =1$ and $2 \zeta \vert \alpha -1 \vert < \alpha_0$, there exists a $\beta_0 > 0$ such that $\beta_0 \Hdcnorm{\vec{v}}{\Omega}^2 \leq a(\vec{v}, \vec{v})$ for all $\vec{v} \in V_h$.
\end{lemma}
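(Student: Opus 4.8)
The plan is to split $a(\vec{v},\vec{v})$ into the ``hat'' part $\hat{a}(\vec{v},\vec{v})$ coming from the elastic energy and the penalty part, and to show the sum is bounded below by a positive multiple of $\Hdcnorm{\vec{v}}{\Omega}^2$. First I would invoke \cite[Lemma 3.7]{Emerson1}: when $\kappa = 1$, the matrix $\vec{Z}(\kdirector)$ reduces to the identity, the cross terms with coefficient $K_2 - K_3$ vanish, and $\hat{a}$ becomes essentially the Div-Curl bilinear form, so we have $\alpha_0 \Hdcnorm{\vec{v}}{\Omega}^2 \leq \hat{a}(\vec{v},\vec{v})$ for all $\vec{v} \in V_h$ with the coercivity constant $\alpha_0 > 0$ referenced in the statement. (Strictly, on $\Hdcnot$ with Dirichlet data this uses the equivalence $\Hdcnot = \Honenot{\Omega}^3$ and a Poincar\'e-type inequality, but that is already packaged inside the cited lemma.)

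Next I would handle the two penalty contributions in $2\zeta\big(\Ltwoinner{\kdirector \cdot \kdirector -1}{\vec{v}\cdot\vec{v}}{\Omega} + 2\Ltwoinner{\vec{v}\cdot\kdirector}{\vec{v}\cdot\kdirector}{\Omega}\big)$. The second term is manifestly nonnegative, being $2 \cdot 2\zeta \Ltwonorm{\kdirector \cdot \vec{v}}{\Omega}^2 \geq 0$, so it can simply be discarded from the lower bound. The first term is the only one that can be negative: since $\alpha \leq \ltwonorm{\kdirector}^2 \leq \beta$ with $0 < \alpha \leq 1 \leq \beta$, the multiplier $\kdirector\cdot\kdirector - 1$ takes values in $[\alpha - 1, \beta - 1]$, so $|\kdirector \cdot \kdirector - 1| \leq |\alpha - 1|$ is \emph{not} quite right in general — rather $\kdirector\cdot\kdirector - 1 \geq \alpha - 1$, and hence
\begin{align*}
2\zeta\,\Ltwoinner{\kdirector \cdot \kdirector -1}{\vec{v}\cdot\vec{v}}{\Omega} \geq 2\zeta(\alpha - 1)\Ltwonormndim{\vec{v}}{\Omega}{3}^2 \geq -2\zeta|\alpha - 1|\,\Hdcnorm{\vec{v}}{\Omega}^2,
\end{align*}
using $\Ltwonormndim{\vec{v}}{\Omega}{3}^2 \leq \Hdcnorm{\vec{v}}{\Omega}^2$ and $\alpha \leq 1$. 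Combining, $a(\vec{v},\vec{v}) \geq \big(\alpha_0 - 2\zeta|\alpha - 1|\big)\Hdcnorm{\vec{v}}{\Omega}^2$, so the hypothesis $2\zeta|\alpha - 1| < \alpha_0$ gives the result with $\beta_0 = \alpha_0 - 2\zeta|\alpha - 1| > 0$.

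I expect the only real subtlety to be bookkeeping the sign of the indefinite penalty term correctly: one must not bound it by $|\kdirector\cdot\kdirector - 1| \Ltwonormndim{\vec{v}}{\Omega}{3}^2$ with $C_\mu = \max(1-\alpha,\beta-1)$ as in the continuity proof, since that would require $2\zeta C_\mu < \alpha_0$, a stronger condition than stated; instead one exploits that only the \emph{lower} end $\alpha - 1$ of the range matters for coercivity because the positive part $\kdirector\cdot\kdirector - 1 \geq 0$ (when $\ltwonorm{\kdirector}^2 \geq 1$) only helps. Everything else is a direct assembly of the cited Div-Curl coercivity estimate with these elementary pointwise bounds, so no further machinery is needed.
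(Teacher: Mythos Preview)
Your proposal is correct and follows essentially the same route as the paper: split $a(\vec{v},\vec{v})$ into $\hat{a}(\vec{v},\vec{v})$ plus the two penalty terms, invoke the coercivity constant $\alpha_0$ from \cite[Lemma 3.7]{Emerson1} for $\hat{a}$, discard the nonnegative term $4\zeta\Ltwonorm{\kdirector\cdot\vec{v}}{\Omega}^2$, bound the indefinite term from below via $\kdirector\cdot\kdirector - 1 \geq \alpha - 1$ and $\Ltwonormndim{\vec{v}}{\Omega}{3}^2 \leq \Hdcnorm{\vec{v}}{\Omega}^2$, and set $\beta_0 = \alpha_0 - 2\zeta|\alpha-1|$. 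Your closing observation---that only the lower end $\alpha-1$ of the range matters, not $C_\mu$---is exactly the point and is implicit in the paper's argument as well.
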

\begin{proof}
\begin{align*}
a(\vec{v},\vec{v}) = \hat{a}(\vec{v},\vec{v}) +  2\zeta \Ltwoinner{\kdirector \cdot \kdirector -1}{\vec{v} \cdot \vec{v}}{3} + 4 \zeta \Ltwoinner{\vec{v} \cdot \kdirector}{\vec{v} \cdot \kdirector}{\Omega}
\end{align*}
Using the coercivity of $\hat{a}(\vec{v},\vec{v})$ from \cite[Lemma 3.7]{Emerson1} and the fact that $\Ltwoinner{\vec{v} \cdot \kdirector}{\vec{v} \cdot
  \kdirector}{\Omega} \geq 0$, 
\begin{equation*}
\alpha_0 \Hdcnorm{\vec{v}}{\Omega}^2 \leq \hat{a}(\vec{v},\vec{v}) + 4\zeta \Ltwoinner{\vec{v} \cdot \kdirector}{\vec{v} \cdot \kdirector}{\Omega}.
\end{equation*}
Observe that
\begin{equation*}
\Ltwoinner{\kdirector \cdot \kdirector -1}{\vec{v} \cdot \vec{v}}{\Omega} = \int_{\Omega} (\kdirector \cdot \kdirector -1)(\vec{v} \cdot \vec{v}) \diff{V}.
\end{equation*}
If $\alpha \leq \kdirector \cdot \kdirector \leq \beta$ for all $\vec{x} \in \Omega$ with $0 < \alpha \leq 1 \leq \beta$, then $(\alpha -1) \leq 0$, and
\begin{align}
\Ltwoinner{\kdirector \cdot \kdirector -1}{\vec{v} \cdot \vec{v}}{\Omega} &\geq (\alpha -1) \int_{\Omega} \vec{v} \cdot \vec{v} \diff{V} \geq (\alpha-1) \Hdcnorm{\vec{v}}{\Omega}^2 \label{penaltyboundbelow}.
\end{align}
Letting $\beta_0 = \alpha_0 - 2 \zeta \vert \alpha -1 \vert$,
\begin{equation*}
 \beta_0 \Hdcnorm{\vec{v}}{\Omega}^2 \leq \hat{a}(\vec{v},\vec{v}) + 4\zeta \Ltwoinner{\vec{v} \cdot \kdirector}{\vec{v} \cdot \kdirector}{\Omega} + 2 \zeta \Ltwoinner{\kdirector \cdot \kdirector -1}{\vec{v} \cdot \vec{v}}{\Omega}.
\end{equation*}
Hence, if $2 \zeta \vert \alpha -1 \vert < \alpha_0$, then $\beta_0 > 0$.
\end{proof}

\noindent Therefore, $a(\vec{u}, \vec{v})$ is coercive for $\kappa = 1$ if $\zeta$ is not so large in comparison to the pointwise lower bound on the director length as to overwhelm $\alpha_0$. 

As in \cite{Emerson1}, the assumption that $\kappa = 1$ can be loosened to include some anisotropy and retain coercivity of $a(\vec{u}, \vec{v})$. Let $C > 0$ such that $\Ltwonorm{\vec{v}}{\Omega}^2 \leq C \big( \Ltwonorm{\diverg \vec{v}}{\Omega}^2 + \Ltwonorm{\curl \vec{v}}{\Omega}^2 \big)$ (see \cite{Emerson1}). Further, let $\alpha_1 > 0$ be defined as in the proof of \cite[Lemma 3.8]{Emerson1}, where $K' = \min(K_1, \eta K_3)$ and $\alpha_1 = \frac{K'}{(C+1)}$. The following extends the results of \cite[Lemma 3.8]{Emerson1} to the penalty method.
\begin{lemma}[Small Data] \label{coercivitysmalldata}
Under Assumption \ref{secass2}, if 
\begin{equation*}
\beta_1 = \frac{\min(K_1, K_3)}{C+1} - 2 \zeta \vert \alpha -1 \vert > 0,
\end{equation*}
there exists $\epsilon_1, \epsilon_2 > 0$, dependent on $\beta=\max \ltwonorm{\kdirector}^2$, such that for $\kappa \in (1-\epsilon_2, 1+\epsilon_1)$, $a(\vec{u},\vec{v})$ is coercive. 
\end{lemma}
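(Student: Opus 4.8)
The plan is to mirror the structure of the proof of Lemma~\ref{coercivityauv} while tracking the additional error terms that arise when $\kappa \neq 1$. First I would split the bilinear form as $a(\vec{v},\vec{v}) = \hat{a}(\vec{v},\vec{v}) + 2\zeta\Ltwoinner{\kdirector\cdot\kdirector-1}{\vec{v}\cdot\vec{v}}{\Omega} + 4\zeta\Ltwoinner{\vec{v}\cdot\kdirector}{\vec{v}\cdot\kdirector}{\Omega}$, exactly as before. The last term is nonnegative and can be discarded. For the penalty term I would reuse the bound \eqref{penaltyboundbelow}, namely $2\zeta\Ltwoinner{\kdirector\cdot\kdirector-1}{\vec{v}\cdot\vec{v}}{\Omega} \geq -2\zeta\vert\alpha-1\vert\,\Hdcnorm{\vec{v}}{\Omega}^2$. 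The work therefore reduces to showing that $\hat{a}(\vec{v},\vec{v}) \geq \frac{\min(K_1,K_3)}{C+1}\Hdcnorm{\vec{v}}{\Omega}^2$ whenever $\kappa$ is in a sufficiently small neighborhood of $1$; combining this with the penalty estimate and the definition of $\beta_1$ then yields $\beta_1\Hdcnorm{\vec{v}}{\Omega}^2 \leq a(\vec{v},\vec{v})$.

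To control $\hat{a}(\vec{v},\vec{v})$, I would follow the argument of \cite[Lemma 3.8]{Emerson1}. The form $\hat{a}$ consists of the ``principal'' Div--Curl part, $2K_1\Ltwonorm{\diverg\vec{v}}{\Omega}^2 + 2K_3\Ltwoinnerndim{\vec{Z}(\kdirector)\curl\vec{v}}{\curl\vec{v}}{\Omega}{3}$, plus the $(K_2-K_3)$-weighted cross terms from \eqref{penaltysecondderivative}. Using the USPD bounds on $\vec{Z}(\kdirector)$ from Assumption~\ref{secass2}, the principal part is bounded below by $2\min(K_1,\eta K_3)\big(\Ltwonorm{\diverg\vec{v}}{\Omega}^2 + \Ltwonormndim{\curl\vec{v}}{\Omega}{3}^2\big)$, and then the Friedrichs-type inequality with constant $C$ converts this into a genuine $\Hdcnorm{\cdot}{\Omega}^2$ bound, giving a constant of the form $\alpha_1 = \frac{\min(K_1,\eta K_3)}{C+1}$ up to the factor absorbed from the $L^2$ part. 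Since $K_2 - K_3 = (\kappa - 1)K_3$, every cross term carries a factor $\vert\kappa-1\vert$; each such term is estimated by Cauchy--Schwarz together with $\ltwonorm{\kdirector}\leq\sqrt{\beta}$ and $\vert\curl\kdirector\vert \lesssim \Honenorm{\kdirector}{\Omega}$-type bounds (as in \cite{Emerson1}), producing a total perturbation of size $C_\beta\vert\kappa-1\vert\,\Hdcnorm{\vec{v}}{\Omega}^2$ for some $C_\beta$ depending on $\beta$ and the Frank constants. Choosing $\epsilon_1,\epsilon_2>0$ small enough that $C_\beta\,\epsilon_i < \alpha_1 - \frac{\min(K_1,K_3)}{C+1}$ (note $\eta\le 1$ so $\alpha_1$ may need to be compared carefully with $\frac{\min(K_1,K_3)}{C+1}$; in the regime $\kappa\to 1$ one has $\eta\to 1$, which is the mechanism that makes this work) secures $\hat{a}(\vec{v},\vec{v}) \geq \frac{\min(K_1,K_3)}{C+1}\Hdcnorm{\vec{v}}{\Omega}^2$.

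Finally I would assemble the pieces: for $\kappa\in(1-\epsilon_2,1+\epsilon_1)$,
\begin{align*}
a(\vec{v},\vec{v}) &\geq \hat{a}(\vec{v},\vec{v}) + 2\zeta\Ltwoinner{\kdirector\cdot\kdirector-1}{\vec{v}\cdot\vec{v}}{\Omega} \\
&\geq \frac{\min(K_1,K_3)}{C+1}\Hdcnorm{\vec{v}}{\Omega}^2 - 2\zeta\vert\alpha-1\vert\,\Hdcnorm{\vec{v}}{\Omega}^2 = \beta_1\Hdcnorm{\vec{v}}{\Omega}^2,
\end{align*}
which is positive by hypothesis. Together with Lemmas~\ref{boundedlinearform} and \ref{boundedbilinearform} and the Lax--Milgram theorem, this establishes well-posedness of \eqref{auvform} in this parameter regime.

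The main obstacle I anticipate is the bookkeeping in the second paragraph: carefully re-deriving from \cite[Lemma 3.8]{Emerson1} how the $\eta$-dependence of the coercivity constant interacts with the $\vert\kappa-1\vert$-dependence of both $\eta$ (since $\kappa=K_2/K_3$ controls $\vec{Z}$'s spectrum) and the cross terms, so that the neighborhood radii $\epsilon_1,\epsilon_2$ can be chosen consistently and shown to depend only on $\beta$ and the Frank constants. The penalty contribution itself is the easy part — it is handled verbatim by \eqref{penaltyboundbelow} — so the novelty over \cite[Lemma 3.8]{Emerson1} is merely additive and the real content is inherited from that reference.
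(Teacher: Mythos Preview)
Your overall strategy matches the paper's: isolate the penalty contribution via \eqref{penaltyboundbelow}, reduce to a coercivity estimate for the elastic part, and invoke \cite[Lemma 3.8]{Emerson1} to control the $(K_2-K_3)$-weighted cross terms as $O(\vert\kappa-1\vert)$ perturbations. The penalty handling is indeed verbatim, and the novelty is additive, as you say.

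There is, however, a concrete gap in the second paragraph. You aim to prove $\hat{a}(\vec{v},\vec{v}) \geq \frac{\min(K_1,K_3)}{C+1}\Hdcnorm{\vec{v}}{\Omega}^2$ and then conclude $a(\vec{v},\vec{v}) \geq \beta_1\Hdcnorm{\vec{v}}{\Omega}^2$ with coercivity constant exactly $\beta_1$. This target is too strong. The principal part yields only $\alpha_1 = \frac{\min(K_1,\eta K_3)}{C+1}$, and since $\eta \leq 1$ (with equality only when $\kappa \geq 1$), one has $\alpha_1 \leq \frac{\min(K_1,K_3)}{C+1}$. The cross terms then subtract a further $C_\beta\vert\kappa-1\vert$. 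Your proposed condition $C_\beta\,\epsilon_i < \alpha_1 - \frac{\min(K_1,K_3)}{C+1}$ therefore asks for a positive quantity to be bounded by something nonpositive, which is vacuous. You flag this parenthetically but do not resolve it.

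The paper's remedy is to accept a coercivity constant strictly smaller than $\beta_1$, of the form $\beta_1 - \epsilon_i\cdot(\text{const})$, and to choose $\epsilon_i$ so that this remains positive. Concretely, the paper splits into three cases ($\kappa>1$; $\kappa<1$ with $K_1<K_3$; $\kappa<1$ with $K_3\leq K_1$) precisely to track how $\eta$, and hence $\alpha_1$, depends on $\kappa$. In the first two cases $\alpha_1$ is independent of $\kappa$ and equals $\frac{\min(K_1,K_3)}{C+1}$, so the bound reads $a(\vec{v},\vec{v}) \geq (\beta_1 - \epsilon_i\alpha_j)\Hdcnorm{\vec{v}}{\Omega}^2$ with $\alpha_j$ the cross-term constant from \cite{Emerson1}. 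In the third case $\alpha_1$ itself loses an $\epsilon_2\beta K_3/(C+1)$ term, which must also be absorbed. Your proposal would be complete once you replace the unattainable target by the weaker one $\hat{a}(\vec{v},\vec{v}) \geq \big(\tfrac{\min(K_1,K_3)}{C+1} - O(\vert\kappa-1\vert)\big)\Hdcnorm{\vec{v}}{\Omega}^2$ and carry the $O(\vert\kappa-1\vert)$ loss through to the final estimate.
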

\begin{proof}
Let 
\begin{align*}
\tilde{a}(\vec{v}, \vec{v}) &= K_1 \Ltwoinner{\diverg \vec{v}}{\diverg \vec{v}}{\Omega} + K_3 \Ltwoinnerndim{\vec{Z}(\kdirector) \curl \vec{v}}{\curl \vec{v}}{\Omega}{3} + 4 \zeta \Ltwoinner{\vec{v} \cdot \kdirector}{\vec{v} \cdot \kdirector}{\Omega} \nonumber \\
& \qquad +2 \zeta \Ltwoinner{\kdirector \cdot \kdirector -1}{\vec{v} \cdot \vec{v}}{\Omega}.
\end{align*}
From the proof of \cite[Lemma 3.8]{Emerson1}, the fact that $\zeta > 0$, and \eqref{penaltyboundbelow},
\begin{equation} \label{partialcoercivitysmalldata}
(\alpha_1- 2\zeta \vert \alpha -1 \vert) \Hdcnorm{\vec{v}}{\Omega}^2 \leq \tilde{a}(\vec{v}, \vec{v}).
\end{equation}
The USPD lower bound for $\vec{Z}(\kdirector)$, $\eta$, may depend on $\kappa$ \cite[Lemma 2.1]{Emerson1}. Thus, the proof is split into three cases.
\begin{caseof}
\case{$\kappa=1+\epsilon_1$, for $\epsilon_1 >0$.}{If this case holds, then $\eta=1$. Hence, $\alpha_1$, defined for \eqref{partialcoercivitysmalldata}, is independent of $\kappa$. Since $K_2-K_3 = K_3(\kappa-1)$, the discrete bilinear form of \eqref{penaltysecondderivative} becomes
\begin{align}
a(\vec{v}, \vec{v}) =& \tilde{a}(\vec{v}, \vec{v}) + \epsilon_1 K_3 \Big(2 \Ltwoinner{\vec{v} \cdot \curl \vec{v}}{\kdirector \cdot \curl \kdirector}{\Omega} + 2\Ltwoinner{\kdirector \cdot \curl \vec{v}}{\vec{v} \cdot \curl \kdirector}{\Omega} \nonumber \\
& \qquad +\Ltwoinner{\vec{v} \cdot \curl \kdirector}{\vec{v} \cdot \curl \kdirector}{\Omega} \Big) \label{avvsmalldata1}.
\end{align}
Observe that from \eqref{partialcoercivitysmalldata},
\begin{align}
(\alpha_1- 2\zeta \vert \alpha -1 \vert) \leq \tilde{a}(\vec{v}, \vec{v}) +\epsilon_1 K_3 \Ltwoinner{\vec{v} \cdot \curl \kdirector}{\vec{v} \cdot \curl \kdirector}{\Omega}. \label{partialcoercivitysmalldatacase1}
\end{align}
Consider the magnitude of the terms in \eqref{avvsmalldata1} not bounded from below in \eqref{partialcoercivitysmalldatacase1}, denoted as $\mathcal{G}(\vec{v}, \vec{v})$. As in the proof of Lemma 3.8 in \cite{Emerson1},
\begin{align} \label{case1Gbound}
\vert \mathcal{G}(\vec{v}, \vec{v}) \vert \leq& \epsilon_1 \alpha_3 \Hdcnorm{\vec{v}}{\Omega}^2,
\end{align}
where $\alpha_3$ is a constant defined in \cite{Emerson1}. Utilizing \eqref{partialcoercivitysmalldatacase1} and \eqref{case1Gbound},
\begin{align*}
a(\vec{v}, \vec{v}) \geq \alpha_1 \Hdcnorm{\vec{v}}{\Omega}^2 - 2\zeta \vert \alpha -1 \vert \Hdcnorm{\vec{v}}{\Omega}^2 - \epsilon_1 \alpha_3 \Hdcnorm{\vec{v}}{\Omega}^2 = (\beta_1 -\epsilon_1 \alpha_3) \Hdcnorm{\vec{v}}{\Omega}^2.
\end{align*}
It is, thus, sufficient to have $\epsilon_1 < \beta_1/\alpha_3$, guaranteeing that $(\beta_1 -\epsilon_1 \alpha_3)>0$.}

\case{$\kappa=1-\epsilon_2>0$, for $\epsilon_2>0$, and $K_1<K_3$.}
{Since $\kappa<1$, $\eta = 1+(\kappa-1)\beta=(1-\epsilon_2 \beta)$. For $K_1 < K_3$, there exists an $\epsilon_2$ small enough, such that $K_1 < (1-\epsilon_2 \beta)K_3$. This implies that, for small enough $\epsilon_2$,
\begin{equation*}
\alpha_1 = \frac{\min(K_1, (1-\epsilon_2 \beta)K_3)}{(C+1)} = \frac{K_1}{(C+1)}.
\end{equation*}
Therefore, $\alpha_1$ is again independent of $\kappa$. Since $K_2-K_3 = K_3 (\kappa-1)$, the discrete bilinear form of \eqref{penaltysecondderivative} becomes
\begin{align}
a(\vec{v}, \vec{v}) =& \tilde{a}(\vec{v}, \vec{v})- \epsilon_2 K_3 \Big(2 \Ltwoinner{\vec{v} \cdot \curl \vec{v}}{\kdirector \cdot \curl \kdirector}{\Omega}+2\Ltwoinner{\kdirector \cdot \curl \vec{v}}{\vec{v} \cdot \curl \kdirector}{\Omega}\nonumber \\
&\qquad +\Ltwoinner{\vec{v} \cdot \curl \kdirector}{\vec{v} \cdot \curl \kdirector}{\Omega} \Big) \label{avvsmalldatacase2}.
\end{align}
The terms of \eqref{avvsmalldatacase2}, not already bounded from below in \eqref{partialcoercivitysmalldata}, are bounded, utilizing \cite[Lemma 3.8]{Emerson1}, as
\begin{align} \label{case2Gbound}
\vert \mathcal{G}(\vec{v}, \vec{v}) \vert \leq \epsilon_2 \alpha_4  \Hdcnorm{\vec{v}}{\Omega}^2,
\end{align}
where $\alpha_4$ is a constant defined in \cite{Emerson1}. Using \eqref{partialcoercivitysmalldata} and \eqref{case2Gbound} implies that
\begin{equation*}
a(\vec{v}, \vec{v}) \geq \alpha_1 \Hdcnorm{\vec{v}}{\Omega}^2 - 2\zeta \vert \alpha -1 \vert \Hdcnorm{\vec{v}}{\Omega}^2 - \epsilon_2 \alpha_4 \Hdcnorm{\vec{v}}{\Omega}^2 \geq (\beta_1-\epsilon_2 \alpha_4) \Hdcnorm{\vec{v}}{\Omega}^2.
\end{equation*}
Thus, possibly requiring $\epsilon_2$ to be even smaller, choose $\epsilon_2 < \beta_1/\alpha_4$, so that $(\beta_1-\epsilon_2 \alpha_4)>0$.
 
In the case that $\kappa<1$, the additional restriction that $\beta < \frac{1}{1-\kappa}$ for $\vec{Z}$ to be USPD is necessary, which implies that $\epsilon_2\beta < 1$ is required. Therefore, for any fixed $\beta$, $\epsilon_2$ must also be taken small enough to satisfy this condition. Hence,
\begin{equation*}
\epsilon_2 < \min \left (\frac{\beta_1}{\alpha_4}, \frac{K_3 - K_1}{\beta K_3}, \frac{1}{\beta} \right).
\end{equation*}
}
\case{$\kappa=1-\epsilon_2>0$, for $\epsilon_2>0$, and $K_3 \leq K_1$.}{Here, again, $\eta = (1-\epsilon_2 \beta)$. For this case, it is clear that $(1-\epsilon_2 \beta)K_3 < K_1$. Thus,
\begin{equation*}
\alpha_1 = \frac{(1-\epsilon_2 \beta) K_3}{(C+1)}.
\end{equation*}
Using the same $\alpha_4$ as in the previous case and similar arguments,
\begin{align*}
a(\vec{v}, \vec{v}) &\geq \alpha_1 \Hdcnorm{\vec{v}}{\Omega}^2 - 2\zeta \vert \alpha -1 \vert \Hdcnorm{\vec{v}}{\Omega}^2 - \epsilon_2 \alpha_4 \Hdcnorm{\vec{v}}{\Omega}^2 \nonumber \\
&= \left (\frac{K_3}{C+1} - 2\zeta \vert \alpha -1 \vert - \frac{\epsilon_2 \beta K_3}{C+1} - \epsilon_2 \alpha_4 \right ) \Hdcnorm{\vec{v} }{\Omega}^2 \nonumber \\
& \geq \left( \beta_1 - \frac{\epsilon_2 \beta K_3}{C+1} - \epsilon_2 \alpha_4 \right) \Hdcnorm{\vec{v} }{\Omega}^2.
\end{align*}
Hence, in order for $\left( \beta_1 - \frac{\epsilon_2 \beta K_3}{C+1} - \epsilon_2 \alpha_4 \right)>0$ to hold, it is necessary that
\begin{equation*}
\epsilon_2 < \frac{\beta_1 (C+1)}{K_3 \beta + \alpha_4 (C+1)}.
\end{equation*}
Finally, $\epsilon_2$ must still be chosen sufficiently small with respect to $\beta$ such that $\epsilon_2 \beta < 1$, as in Case 2. Therefore,
\begin{equation*}
\epsilon_2 < \min \left (\frac{\beta_1 (C+1)}{K_3 \beta + \alpha_4 (C+1)}, \frac{1}{\beta} \right).
\end{equation*}
}
\end{caseof}
\end{proof} \\
\noindent Although the bounds on $\epsilon_1$ and $\epsilon_2$ are complicated, the only constant therein that depends on $\zeta$ is $\alpha_2$. The remainder of the constants are independent of $\zeta$.

The above lemmas allow for the formulation of the following summary theorem. 
\begin{theorem}
Under Assumption \ref{secass2}, if the conditions of Lemma \ref{coercivityauv} or Lemma \ref{coercivitysmalldata} are satisfied, the discrete variational problem in \eqref{auvform} is well-posed.
\end{theorem}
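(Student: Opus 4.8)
\section*{Proof proposal}

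The plan is to assemble the pieces already in hand and invoke the Lax--Milgram theorem on the finite-dimensional space $V_h$. Concretely, Lemma \ref{boundedlinearform} gives that $F$ is a bounded linear functional on $V_h$, and Lemma \ref{boundedbilinearform} gives that the bilinear form $a(\cdot,\cdot)$ is continuous on $V_h \times V_h$. The remaining hypothesis needed for Lax--Milgram is coercivity of $a(\cdot,\cdot)$ on $V_h$; this is supplied by Lemma \ref{coercivityauv} (in the isotropic case $\kappa = 1$, under $2\zeta|\alpha-1| < \alpha_0$) or by Lemma \ref{coercivitysmalldata} (in the near-isotropic case $\kappa \in (1-\epsilon_2, 1+\epsilon_1)$, under $\beta_1 > 0$ together with the stated bounds on $\epsilon_1,\epsilon_2$). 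Under the stated hypothesis, one of these two lemmas applies, so a coercivity constant $\beta_0 > 0$ (respectively $\beta_1 - \epsilon_1\alpha_3 > 0$ or the analogous positive constant) is available.

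First I would state explicitly that $V_h$, being a finite-dimensional subspace of $\Hdcnot$ equipped with the $\Hdcnorm{\cdot}{\Omega}$ norm, is a Hilbert space. Then I would quote the Lax--Milgram theorem: given a Hilbert space $V_h$, a continuous coercive bilinear form $a$, and a bounded linear functional $F$, there is a unique $\ddirector_h \in V_h$ with $a(\ddirector_h, \vec{v}_h) = F(\vec{v}_h)$ for all $\vec{v}_h \in V_h$, and moreover $\Hdcnorm{\ddirector_h}{\Omega} \leq \frac{1}{c_0}\|F\|_{V_h'}$, where $c_0$ is the coercivity constant. The last inequality is the stability (continuous-dependence-on-data) statement that, together with existence and uniqueness, constitutes well-posedness of \eqref{auvform}.

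There is essentially no obstacle here; the content of the theorem has been fully discharged by the three lemmas, and what remains is only the bookkeeping of citing Lax--Milgram and noting which coercivity constant plays the role of $c_0$ in each regime. The one point worth a sentence of care is to make clear that the two cases of the hypothesis (conditions of Lemma \ref{coercivityauv} versus conditions of Lemma \ref{coercivitysmalldata}) are treated in parallel and that in either case all three Lax--Milgram hypotheses hold simultaneously with mesh-independent constants, so the stability bound is uniform in $h$. I would close by remarking that the same argument applies verbatim under mixed Dirichlet and periodic boundary conditions on a rectangular domain, as noted after Assumption \ref{secass2}.
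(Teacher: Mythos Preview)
Your proposal is correct and matches the paper's own proof essentially verbatim: cite Lemmas \ref{boundedlinearform} and \ref{boundedbilinearform} for continuity of $F$ and $a$, invoke Lemma \ref{coercivityauv} or \ref{coercivitysmalldata} for coercivity, and apply Lax--Milgram. The extra remarks you include (explicit stability bound, uniformity in $h$, the boundary-condition comment) are harmless elaborations but not required.
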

\begin{proof}
Lemmas \ref{boundedlinearform} and \ref{boundedbilinearform} imply that $F(\vec{v})$ and $a(\vec{u}, \vec{v})$ are continuous, respectively. Lemmas \ref{coercivityauv} or \ref{coercivitysmalldata} imply that $a(\vec{u}, \vec{v})$ is coercive. Therefore, by the Lax-Milgram Theorem \cite{Braess1}, \eqref{auvform} is a well-posed discrete variational problem.
\end{proof}

Therefore, the discretization of the linearizations arising in the penalty method are always well-posed under the assumption of small anisotropy in the system coefficients and sufficient conformance to the unit-length constraint. However, the penalty parameter must be chosen appropriately to achieve accurate representation of the unit-length constraint. If $\zeta$ is too small, constraint conformance becomes poor and the functional minimum does not accurately represent the constrained minimum. Alternatively, if $\zeta$ is too large, the solvability of the intermediate variational systems degrades in two possible ways. The neighborhood admitting coercivity around $\kappa=1$ shrinks and the system becomes increasingly ill-conditioned due to a decreasing coercivity constant, or coercivity is lost entirely and the possibility of non-invertible matrices appears. On the other hand, the proof of such well-posedness does not require establishing an inf-sup condition that necessitates subtle choices of finite-element spaces, as used for the Lagrange multiplier approach in \cite{Emerson1, Emerson2}.

\section{Robust Newton Step Methods} \label{TrustRegionMethods}

The Newton method applied to the Lagrange multiplier formulation discussed in \cite{Emerson1, Emerson2} employs na\"{i}ve incomplete Newton stepping. That is, for a computed Newton update direction, $\ddirector$, a constant damping factor, $0 < \omega \leq 1$, is applied such that the new iterate is given as $\director_{k+1} = \kdirector + \omega \ddirector$. Such an approach aims to improve convergence robustness when dealing with an inaccurate initial guess on coarse grids. However, this procedure may miss opportunities to take larger steps in ``good'' descent directions that effectively reduce the free energy.

Trust-region techniques are specifically designed to improve the robustness and efficiency of iterative procedures such as Newton's method. Updates are confined to a neighborhood, known as a trust region, where the accuracy of the linearized first-order optimality conditions is ``trusted''. These neighborhoods are expanded or contracted based on a measure of the model fidelity for a computed update. Significant research has produced both theoretical support and practical applications of such techniques \cite{Nocedal1}. This section discusses the use of constrained and unconstrained trust-region methods for the Lagrangian and penalty approaches discussed in Section \ref{energymodels}. For a general overview of trust-region methods, see \cite{Nocedal1}.

\subsection{Trust-Region Approaches for the Penalty Formulation}

Using the penalty functional in \eqref{Dirichletpenaltyfunctional}, the desired energy minimization is unconstrained. For this subsection, we denote the discretized forms of $\pd{}{\director} \left( \mathcal{P}_{\director}(\kdirector)[\vec{v}] \right)[\ddirector]$ and $\mathcal{P}_{\director}(\kdirector)[\vec{v}]$ as $U_k$ and $\vec{f}_k$, respectively. The quadratic model of the penalty functional, for a given $\kdirector$, is written
\begin{align} \label{quadraticModel}
M_k(\ddirector) = \mathcal{P}(\kdirector) + \vec{f}_k^T \ddirector + \frac{1}{2} \ddirector^T U_k \ddirector.
\end{align} 
As a consequence of the well-posedness theory developed in Section \ref{penaltywellposedness}, the matrix $U_k$ is positive definite for each iteration. Therefore, we follow the methodology in \cite{Byrd1, Shultz1}, computing steps by solving a trust-region minimization problem.

We seek an efficiently computable correction, $\ddirector$, that approximately minimizes the model in \eqref{quadraticModel}. In the following, we introduce two approaches to computing step length and direction for this problem. The performance of these techniques is vetted in the numerical experiments below.

Incomplete Newton stepping is equivalent to taking a small step in the descent direction, $-U_k^{-1}\vec{f}_k$. This is an effective means of finding energy minimizing solutions for both the penalty and Lagrangian methods. Therefore, in the first approach, a simple step selection technique is used in which the step is chosen satisfying the constrained minimization problem
\begin{align} \label{simplestepselection}
\ddirector(\Delta_k) &= \argmin \{ \mathcal{P}(\kdirector) + \vec{f}_k^T\ddirector+ \frac{1}{2} \ddirector^T U_k \ddirector : \nonumber \\
& \qquad \qquad \qquad \qquad \qquad \qquad \ltwonorm{\ddirector} \leq \Delta_k, \text{ } \ddirector = \mu U_k^{-1}\vec{f}_k \},
\end{align}
where $\Delta_k$ indicates the trust-region radius for iterate $\kdirector$. Candidate solutions of $\eqref{simplestepselection}$ are easily computed to be $-U_k^{-1}\vec{f}_k$, the full Newton step, which may or may not be inside the trust region, and $\pm \frac{\Delta_k}{\ltwonorm{U_k^{-1}\vec{f}_k}} U_k^{-1}\vec{f}_k$, representing steps to the trust region boundary.

An important aspect of trust-region methods is the adjustment of the trust-region radius and application of a computed step. This typically involves a measure of a computed step's merit. For a computed step, $\ddirector$, we compute the ratio,
\begin{align*}
\rho_k = \frac{\mathcal{P}(\kdirector) - \mathcal{P}(\kdirector + \ddirector)}{M_k(\vec{0}) - M_k(\ddirector)},
\end{align*}
of the actual to the predicted reduction in $\mathcal{P}$ due to the computed step. The closer $\rho_k$ is to $1$, the more accurately the quadratic model behavior matches that of the true functional.

If the ratio, $\rho_k$, is deemed acceptable, the step is applied and the trust region expands, remains static, or shrinks depending on the specific value of $\rho_k$. If $\rho_k$ is too small, the step is rejected, the trust-region radius is shrunk, and the process repeated. To quantify, let $0 < \eta_3 < \eta_1 < \eta_2$ be positive constants, along with $0 < C_1 < 1 < C_3$. Further, let $\bar{\Delta}$ be a maximum limit on the trust-region size. Using these parameters, the specific decision trees for accepting the step, and subsequently adjusting the trust region, are given in Procedures \ref{SolutionAcceptanceTree} and \ref{TRRegionTree}, respectively.

\noindent
\begin{minipage}[t]{.49 \textwidth}
\vspace{0pt}
\begin{procedure}[H] \label{SolutionAcceptanceTree}
\uIf{$\rho_k > \eta_3$}{Accept step: $\director_{k+1} = \kdirector + \ddirector$.}
\Else{Reject step: $\director_{k+1} = \kdirector$.}
\caption{Solution update().}
\end{procedure}
\end{minipage}
\begin{minipage}[t]{.49 \textwidth}
\vspace{0pt}
\begin{procedure}[H] \label{TRRegionTree}
\uIf{$\rho_k < \eta_1$}{Shrink region: $\Delta_{k+1} = C_1 \Delta_k$.}
\uElseIf{$\rho_k > \eta_2$ and $\ltwonorm{\ddirector} = \Delta_k$}{Expand region: $\Delta_{k+1} = \min (C_3 \Delta_k, \bar{\Delta})$.}
\Else{Keep region constant: $\Delta_{k+1}=\Delta_k$.}
\caption{TR size adjustment().}
\end{procedure}
\end{minipage}
\vspace{0.2cm}

For our algorithm, if the components of the ratio, $\rho_k$, are very small and the computed step lies on the interior of the trust region, representing a full step towards satisfying the first-order optimality conditions, we choose to apply the step regardless of $\rho_k$ and the trust region remains static. In this way, the trust-region minimization approach is used until we trust in the application of full Newton steps to obtain the first-order optimality conditions. A set of typical values for the trust-region parameters discussed above are listed in Table \ref{penaltyTRConstants} and used in the numerical methods below. 

A number of well-founded techniques improving trust-region step selection exist, including dogleg and two-dimensional (2D) subspace methods \cite{Byrd1, Fletcher1, Sorensen1, Nocedal1}. Because the 2D-subspace method subsumes both the simple step selection approach above and dogleg methods, it is chosen as the alternative step selection computation here. Steps are computed by solving 
\begin{align} \label{stepselectionByrd}
\ddirector (\Delta_k) &= \argmin \{ \mathcal{P}(\kdirector) + \vec{f}_k^T \ddirector + \frac{1}{2} \ddirector^T U_k \ddirector : \nonumber \\
& \qquad \qquad \qquad \qquad \qquad \qquad \ltwonorm{\ddirector} \leq \Delta_k, \text{ } \ddirector = \mu_1 \vec{f}_k + \mu_2 U_k^{-1}\vec{f}_k \}. 
\end{align}
Again, the candidate solutions for \eqref{stepselectionByrd} are efficiently computable, amounting to solving for the zeroes of a fourth-order polynomial. 

\subsubsection{A Renormalization Penalty Method}

In addition to the standard penalty method discussed above, a modification is also considered in the numerical experiments below. Once the approximation to the solution has been updated with a computed and accepted step, the new approximation is renormalized at the finite-element nodes. That is, the updated approximation is projected onto the unit sphere at each finite-element node. This procedure is similar to that presented in \cite{Ramage3} for a Lagrange multiplier formulation. There, the approach is derived within a nullspace method framework using the one constant approximation. Here, renormalization is applied to the penalty method, with and without trust regions and nested iteration, for anisotropic Frank constants.

This renormalization aims at improving unit-length conformance for solutions computed by the penalty method. The expectation is that this will lead to enhanced constraint conformance at lower penalty weights. However, unless the renormalization scaling is relatively uniform across nodes, the Newton direction may be significantly altered. Throughout this paper, this modification will be referred to as the ``renormalization'' penalty method.

\subsection{A Trust-Region Method for the Lagrange Multiplier Formulation}

Applications of trust-region techniques to optimization problems with nonlinear constraints have also been developed. However, certain challenges arise in the theory and practical use of such methods \cite{Maratos1}. Here, we consider existing trust-region approaches in the context of finite-element methods. For this subsection, let $W_k$ be the matrix associated with a finite-element discretization of the second-order derivative of \eqref{functional2} (i.e., the functional without the Lagrange multiplier term), given by $\pd{}{\director} \left( \mathcal{F}_{\director}(\kdirector)[\vec{v}] \right)[\ddirector]$. For the trust-region approach, write the constraint
\begin{equation} \label{constraintC}
c(\director) = \Ltwoinner{\director \cdot \director -1}{\director \cdot \director -1}{\Omega} = 0.
\end{equation}
The G\^{a}teaux derivative of \eqref{constraintC} is
\begin{equation}\label{gradientC}
\pd{}{\director} c(\director)[\vec{v}] = 4 \Ltwoinner{\director \cdot \director -1}{\director \cdot \vec{v}}{\Omega}.
\end{equation}
Finally, let $\vec{c}_k$ be the column vector representing the finite-element discretized form of \eqref{gradientC} at iterate $\kdirector$.  

One of the significant advantages of finite-element discretizations is the inherent sparsity of the resulting matrices. Trust-region algorithms in the Byrd-Omojokun family \cite{Su1, Byrd2, Omojokun1} require computation of the generally non-sparse matrix $N_k$, whose columns form an orthonormal basis for the orthogonal complement of $\vec{c}_k$, as well as the formation and inversion of the matrix $N_k^T W_k N_k$. In general, the matrix $N_k^T W_k N_k$ is not sparse and quite large, as $W_k$ has dimension $m \times m$ and $N_k$ is $m \times (m-1)$, where $m$ is the number of discretization degrees of freedom for $\director$. Storage and computation with these dense matrices proves to be prohibitive, even on relatively small grids. Therefore, any advantages garnered by the use of these trust regions is outweighed by loss of the finite-element sparsity. Similarly, trust-region methods based on the fundamental work in \cite{Vardi1} suffer from sparsity fill-in issues for large matrices in the context of finite-element methods.

To preserve sparsity properties, while still maintaining some advantages of a trust-region approach, we implement a simple trust-region method specifically fitted to the Lagrange multiplier formulation of the minimization problem. For the Lagrange multiplier approach in Section \ref{LagrangeMultiplierApproach}, we compute a Newton update direction as in \cite{Emerson1}, $\delta \chi = [\ddirector \text{ } \dlambda]^T$. This update is meant to bring $\kdirector$ and $\klambda$ closer to satisfying the first-order optimality conditions. Let $\mathcal{L}_0(\director_k, \lambda_k)$ represent the finite-element discretized form of the right-hand side of Equation \eqref{newtonhessian} for $\director_k$ and $\lambda_k$. Define the proportions $w_k$ and $w_{\text{lim}}$, such that $0 < w_{\text{lim}} \leq w_k \leq 1$, where $w_{\text{lim}}$ is a lower bound for $w_k$. For a given step, $w_k \delta \chi$, the expected change in $\ltwonorm{\mathcal{L}_0(\kdirector, \klambda)}$ is equal to $w_k \ltwonorm{\mathcal{L}_0(\kdirector, \klambda)}$. Therefore, we define the ratio
\begin{equation*}
\rho_k = \frac{\ltwonorm{\mathcal{L}_0(\kdirector, \klambda)} - \ltwonorm{\mathcal{L}_0(\kdirector+w_k \ddirector, \klambda+w_k \dlambda)}}{w_k \ltwonorm{\mathcal{L}_0(\kdirector, \klambda)}}.
\end{equation*}
This ratio compares the change in the Lagrangian predicted by the linearized model to the actual change in the true Lagrangian for a computed step. 

Let $0 < \eta_2 < \eta_1$ and $w_{\text{inc}}, w_{\text{dec}} \in (0, 1]$. Since $w_k$ is a scaling factor, rather than a radius length, step selection and trust-region adjustment differ slightly from the procedures discussed above and are given in Procedures \ref{SolutionAcceptanceTreeLag} and \ref{TRRegionTreeLag}, respectively. 

\noindent
\begin{minipage}[t]{.49 \textwidth}
\vspace{0pt}
\begin{procedure}[H] \label{SolutionAcceptanceTreeLag}
\uIf{$\rho_k > \eta_2$ or $w_k = w_{\text{lim}}$}{Accept step: $[\director_{k+1}\text{ }\lambda_{k+1}]^T = [\kdirector \text{ } \klambda]^T + w_k \delta \chi$.}
\Else{Reject step: $[\director_{k+1}\text{ }\lambda_{k+1}]^T = [\kdirector \text{ } \klambda]^T$.}
\caption{Solution update().}
\end{procedure}
\end{minipage}
\begin{minipage}[t]{.49 \textwidth}
\vspace{0pt}
\begin{procedure}[H] \label{TRRegionTreeLag}
\uIf{$\rho_k < \eta_2$}{Shrink region:  $w_{k+1} = \max(w_{\text{lim}}, w_k - w_{\text{dec}})$.}
\uElseIf{$\eta_2 < \rho_k < \eta_1$}{Keep region constant: $w_{k+1} = w_k$.}
\Else{Expand region: $w_{k+1} = \min(w_k + w_\text{inc}, 1)$.}
\caption{TR size adjustment().}
\end{procedure}
\end{minipage}
\vspace{0.2cm}

\section{Numerical Results} \label{nummethodology}

In this section, we compare the performance of the methods outlined above for three benchmark equilibrium problems. The general algorithm utilized by each method has three stages; see Algorithm \ref{generalAlgorithm}. The outer stage implements nested iteration (NI) \cite{Starke1} where, at each level, the approximation to the solution is iteratively updated. These updates are computed via one of the methods described above. In general, the iteration stopping criterion, on a given level, is based on a set tolerance for the approximation's conformance to the first-order optimality conditions in the standard Euclidean $l_2$-norm. For the renormalization penalty method, the Newton iteration tolerance is based on the reduction of the ratio of the energy from the previous step to the current step rather than conformance to the first-order optimality conditions. In the numerical experiments carried out below, both tolerances were held at $10^{-4}$. The approximate solution is then transferred to a finer grid. In the current implementation, these finer grids represent uniform refinements of the initial coarse grid. However, adaptive refinement could also be performed.

The components of the variational problems in Equations \eqref{newtonhessian} and \eqref{penaltylinearization} are discretized with finite elements on each grid.   Both formulations use $Q_2 \times Q_2 \times Q_2$ elements for $\director$, while the Lagrange multiplier approach uses $P_0$ elements for $\lambda$, as in \cite{Emerson1, Emerson2}. In this section, the arising matrices are inverted using the UMFPACK LU decomposition \cite{TADavis1,TADavis2, TADavis3, TADavis4}. In Section \ref{BraessSarazinSmoothing}, we introduce an optimally scaling multigrid method with improved time to solution. The algorithm's discretizations and grid management are performed with the widely used deal.II finite-element and scientific computing library \cite{BangerthHartmannKanschat2007, DealIIReference}.

\vspace{.3cm}
\begin{algorithm}[H] \label{generalAlgorithm}
\SetAlgoLined
~\\
0. Initialize solution approximation on coarse grid.
~\\
\While{Refinement limit not reached}
{
	\While{Nonlinear iteration tolerance not satisfied}
	{
		1. Assemble discrete components of System \eqref{newtonhessian} or \eqref{penaltylinearization} on current grid, $H$. ~\\
		2. Compute correction to current approximation. ~\\
		3. Update current approximation. ~\\
	}
	4. Uniformly refine the grid to size $h$. ~\\
	5. Interpolate solution $\vec{u}_{H} \to \vec{u}_{h}$.
}
\caption{General minimization algorithm with NI}
\end{algorithm}
\vspace{.3cm}

Each of the problems below is posed on a unit-square domain in the $xy$-plane, such that $\Omega = \{ (x,y) \text{ } \vert \text{ } 0 \leq x,y \leq 1 \}$. It is assumed that this domain represents a uniform slab. That is, the vector $\director$ may have nonzero $z$-component but $\pd{\director}{z} = \vec{0}$. Dirichlet boundary conditions are applied at the $y$-edges and periodic boundary conditions are assumed at the boundaries $x=0$ and $x=1$. The experiments to follow consider an $8 \times 8$ coarse mesh ascending in $6$ uniform refinements to a $512 \times 512$ mesh. 

For the numerical experiments, each of the trust-region methods discussed above is applied. For the penalty trust-region methods, the initial trust region radius is set to $\Delta_{\text{init}}$. At each refinement level, the trust-region radius is reset to $\Delta_{\text{init}}$ plus an incremental increase, $\Delta_{\text{inc}}$, with a maximum of $\bar{\Delta}$. The Lagrangian trust-region approach sets the initial value of $w_k$ to $w_{\text{init}}$. After each refinement, $w_k$ is reset to $w_{\text{init}}$ plus $w_{\text{lev}}$, up to a maximum of $1$. These increments are due to the increasing accuracy of the iterates at each grid level. These constants are outlined in Tables \ref{penaltyTRConstants} and \ref{LagrangianTRConstants}

\begin{table}[h!]
\centering
{\small
\begin{tabular}{|c|c|c|c|}
\hline
$\eta_1 = 0.25$ & $\eta_2 = 0.75$ & $\eta_3 = 0.125$ & $C_1 = 0.5$ \\
\hline
$C_3 = 1.3$ & $\Delta_{\text{inc}} = 0.3$ & \rule{0pt}{1.1em}$\bar{\Delta} = 100$ & $ \Delta_{\text{init}} = 0.3$ \\
\hline
\end{tabular}
}
\caption{\small{Trust-region parameters for the penalty formulation.}}
\label{penaltyTRConstants}
\end{table}

\begin{table}[h!]
\centering
{\small
\begin{tabular}{|c|c|c|c|}
\hline
$\eta_1 = 0.5$ & $\eta_2 = 0.25$ & $w_{\text{inc}} = 0.1$ & $w_{\text{dec}} = 0.1$ \\
\hline
$w_{\text{lev}} = 0.1$ & $w_{\text{min}} = 0.1$ & $w_{\text{init}} = 0.2$ & $-$ \\
\hline
\end{tabular}
}
\caption{\small{Trust-region parameters for the Lagrangian formulation.}}
\label{LagrangianTRConstants}
\end{table}

The non-trust-region, incomplete Newton stepping approach is also performed for both formulations as a comparison benchmark with an initial $\omega = 0.2$, increasing by $0.2$ at each refinement to a maximum of $1$. The performance of each of these methods is then compared. In the results to follow, all reported free energies are computed using only the free elastic quantities without any augmentations, such as the penalty terms.

\subsection{Twist Equilibrium Configuration}

The first set of boundary conditions induce a classical twist equilibrium configuration \cite{Stewart1}. For this experiment, and the tilt-twist experiment in the next subsection, let the general form of the solution be
\begin{equation} \label{sphericalRepresentation}
\director = \big(\cos (\theta(y)) \cos (\phi(y)), \cos (\theta(y)) \sin (\phi (y)), \sin (\theta(y))\big).
\end{equation}
Note that the known analytical solutions have a one-dimensional structure, but the numerical experiments below are full two-dimensional simulations. For the twist configuration, let $\theta_0 = \frac{\pi}{8}$. At the boundaries $\theta(0)=-\theta_0$, $\theta(1) = \theta_0$, and $\phi(0) = \phi(1)=0$. The Frank constants for this problem are $K_1 = 1.0$, $K_2 = 1.2$, and $K_3 = 1.0$. The analytical equilibrium solution for these boundary conditions and Frank constants is derived, under a rotated coordinate system, in \cite{Stewart1}. The solution is given by
\begin{equation*}
\director = (\cos (\theta_0(2y-1)), 0, \sin(\theta_0(2y-1))),
\end{equation*}
with true free-elastic energy $2K_2 \theta_0^2$. This corresponds to an expected free energy of $0.37011$. The existence of an analytical solution for this problem allows for the computation of an $L^2$-error for each computed approximation. 

\begin{table}[h!]
\centering
{\small
\begin{tabular}{|c|c|c|c|c|c|c|}
\hline
Type & Free Energy & $L^2$-error & Min. Dev. & Max Dev. & Cost  & TR Cost\\
\hlinewd{1.3pt}
Lagrangian & $0.370110$ & $2.076$e-11& $-1.43$e-14 & $7.00$e-15 & $1.350$ & $1.340$\\
\hlinewd{1.3pt}
Pen. $\zeta = 10^1$ & $0.358832$ & $1.589$e-02 & $-3.96$e-02 & $-3.59$e-05 & $1.371$ & $1.354$ \\
\hline
Pen. $\zeta = 10^2$ & $0.368481$ & $1.993$e-03 & $-4.32$e-03 & $-1.16$e-05 & $1.376$ & $1.355$\\
\hline
Pen. $\zeta = 10^3$ & $0.369931$ & $2.107$e-04 & $-4.32$e-04 & $-3.68$e-06 & $1.440$ & $1.418$ \\
\hline
Pen. $\zeta = 10^4$ & $0.370092$ & $2.143$e-05 & $-4.32$e-05 & $-1.14$e-06 & $1.448$ & $1.420$ \\
\hline
Pen. $\zeta = 10^5$ & $0.370108$ & $2.154$e-06 & $-4.32$e-06 & $-3.32$e-07 & $1.447$ & $1.426$ \\
\hline
Pen. $\zeta = 10^6$ & $0.370110$ & $2.157$e-07 & $-4.32$e-07 & $-7.27$e-08 & $-$ & $1.436$\\
\hline
Pen. $\zeta = 10^7$ & $0.370110$ & $2.158$e-08 & $-5.05$e-08 & $-9.98$e-09 & $-$ & $1.465$\\
\hline
Pen. $\zeta = 10^8$ & $0.370110$ & $2.158$e-09 & $-5.18$e-09 & $-1.06$e-09 & $-$ & $1.516$\\
\hline
Pen. $\zeta = 10^9$ & $0.370110$ & $2.168$e-10 & $-5.19$e-10 & $-1.06$e-10 & $-$ & $1.639$\\
\hline
\end{tabular}
}
\caption{\small{Statistics for the twist equilibrium solution with the different formulations and penalty weights. Included is the system free energy, the computed $L^2$-error on the finest grid, and the minimum and maximum deviations from unit director length at the quadrature nodes. Approximations of the cost in WUs for the corresponding method with no trust regions and simple trust regions are included. Dashes in the columns indicate divergence.}}
\label{TwistBCFormulationComparisons}
\end{table}

Table \ref{TwistBCFormulationComparisons} compares the performance of the Lagrange multiplier method to the penalty method without renormalization. The runs were performed with nested iteration and the approximate work, measured in terms of assembling and solving a single linearization step on a $512 \times 512$ grid, referred to as a work unit (WU), is given for the corresponding method with no trust regions and the simple trust region approaches, respectively. The work approximation is computed by summing the number of non-zeroes in each matrix across all grids and dividing by the number of non-zeroes in the (fixed) sparsity pattern at the finest level. The non-trust-region, incomplete Newton stepping discussed above diverged for penalty parameters of $\zeta = 10^6$ and greater. However, smaller damping parameters may yield convergence. Both penalty-method trust-region approaches converged without modification.

The table demonstrates the superior performance of the Lagrange multiplier method for this problem across all statistics with lower error, cost, and tighter conformance to the constraint. The penalty method does not match the free energy obtained by the Lagrangian formulation until reaching a penalty weight of $10^6$ and, without trust regions, encounters divergence issues for these large penalty weights. While trust regions do not significantly reduce overall computations costs, Table \ref{TwistBCFormulationComparisons} suggests that they significantly improve robustness. 

\begin{table}[h!]
\centering
{\small
\begin{tabular}{|c|c|c|c|c|c|c|}
\hline
& \multicolumn{2}{|c|}{No Trust Region} & \multicolumn{2}{|c|}{Simple Trust Region} & \multicolumn{2}{|c|}{2D Trust Region} \\
\hline
Type & $L^2$-error & Cost & $L^2$-error & Cost & $L^2$-error & Cost \\
\hline
Pen. $\zeta = 10^1$ & $1.457$e-02 & $1.338$ & $1.457$e-02  & $1.334$ & $1.457$e-02 & $1.334$ \\
\hline
Pen. $\zeta = 10^2$ & $8.932$e-05 & $1.338$ & $8.931$e-05 & $1.334$ & $8.931$e-05 & $1.334$ \\
\hline
Pen. $\zeta = 10^3$ & $3.358$e-06 & $1.339$ & $3.357$e-06 & $1.334$ & $3.357$e-06 & $1.335$ \\
\hline
Pen. $\zeta = 10^4$ & $1.523$e-07 & $1.340$ & $1.116$e-07 & $1.336$ & $1.116$e-07 & $1.336$ \\
\hline
Pen. $\zeta = 10^5$ & $6.260$e-08 & $8.113$ & $3.595$e-09 & $1.364$ & $3.592$e-09 & $1.340$ \\
\hline
Pen. $\zeta = 10^6$ & $6.356$e-06 & $81.120$ & $1.688$e-02 & $73.052$ & $1.098$e-07 & $2.731$ \\
\hline
\end{tabular}
}
\caption{\small{A comparison of renormalization penalty methods, with and without trust-region approaches, for the twist solution. For each algorithm, the computed $L^2$-error on the finest grid and an approximation of the cost in WUs is included.}}
\label{TwistBCFormulationRenormPenaltyTRs}
\end{table}

The results in Tables \ref{TwistBCFormulationRenormPenaltyTRs} and \ref{TwistBCFormulationComparisonsRenormPenalty} show the performance of the renormalization penalty method with and without trust regions. Table \ref{TwistBCFormulationComparisonsRenormPenalty} provides additional statistics for the 2D-subspace minimization trust-region approach discussed in Table \ref{TwistBCFormulationRenormPenaltyTRs}. For the twist equilibrium solution, the renormalization penalty method obtains better error values for smaller penalty weights than the unmodified penalty method. In Table \ref{TwistBCFormulationComparisonsRenormPenalty}, using the 2D-subspace minimization trust-region approach, we obtain an error of $3.592$e-$09$ with a penalty weight of only $\zeta = 10^5$. Moreover, the minimum and maximum deviation of the director at the quadrature nodes is closer to that of the Lagrangian method. However, the performance improvements rely more heavily on the penalty parameter. While an error measure closer to the Lagrange multiplier formulation is achieved for $\zeta = 10^5$, performance degrades at $\zeta = 10^6$, with notable jumps in costs for all methods recorded in Table \ref{TwistBCFormulationRenormPenaltyTRs}. The increases in error are due to the algorithm beginning to emphasize the unit-length constraint over proper director orientation. Correctly selecting the penalty weight represents a fundamental difficulty for this method.

\begin{table}[h!]
\centering
{\small
\begin{tabular}{|c|c|c|c|c|c|}
\hline
Type & Free Energy & $L^2$-error & Min. Dev. & Max Dev. & 2D TR Cost \\
\hline
Pen. $\zeta = 10^1$ & $0.370168$ & $1.457$e-02 & $-4.58$e-11 & $4.58$e-11 & $1.334$ \\
\hline
Pen. $\zeta = 10^2$ & $0.370111$ & $8.931$e-05 & $-1.68$e-11 & $1.68$e-11 & $1.334$ \\
\hline
Pen. $\zeta = 10^3$ & $ 0.370110$ & $3.357$e-06 & $-5.18$e-12 & $5.16$e-12 & $1.335$  \\
\hline
Pen. $\zeta = 10^4$ & $0.370110$ & $1.116$e-07 & $-1.45$e-12 & $1.43$e-12 & $1.336$ \\
\hline
Pen. $\zeta = 10^5$ & $0.370110$ & $3.592$e-09 & $-3.16$e-13 & $2.98$e-13 & $1.340$ \\
\hline
Pen. $\zeta = 10^6$ & $0.370110$ & $1.098$e-07 & $-4.04$e-14 & $2.20$e-14 & $2.731$ \\
\hline
\end{tabular}
}
\caption{\small{Statistics for the twist equilibrium solution with different penalty weights. Here, the penalty method with renormalization and 2D-subspace minimization is considered. Included is the system free energy, the computed $L^2$-error on the finest grid, the minimum and maximum deviations from unit director length at the quadrature nodes, and an approximation of the cost in WUs for the corresponding method.}}
\label{TwistBCFormulationComparisonsRenormPenalty}
\end{table}

Figure \ref{fig:TwistNIFig} displays the number of iterations required to reach the specified iteration tolerance within a nested iteration scheme alongside the final solution computed by the Lagrange multiplier formulation in Figure \ref{fig:TwistHelv}. Counts for both the Lagrange multiplier approach and penalty formulation, with and without renormalization, for a penalty parameter $\zeta = 10^3$ are shown. In general, the trust-region methods significantly reduce iteration counts on the coarse grids. However, on the finer grids, this reduction is not sustained due to the efficiency of nested iteration. Because the improved iteration counts are confined to the coarsest grids, overall cost reduction is generally small. For example, the approximate cost for the Lagrange multiplier method was reduced very slightly from $1.350$ WUs to $1.340$ WUs, only resulting in a one second drop in overall time to solution.

\begin{figure}[h!]
\centering
\begin{subfigure}[b]{.49 \textwidth}
\raggedleft
  \includegraphics[scale=.32]{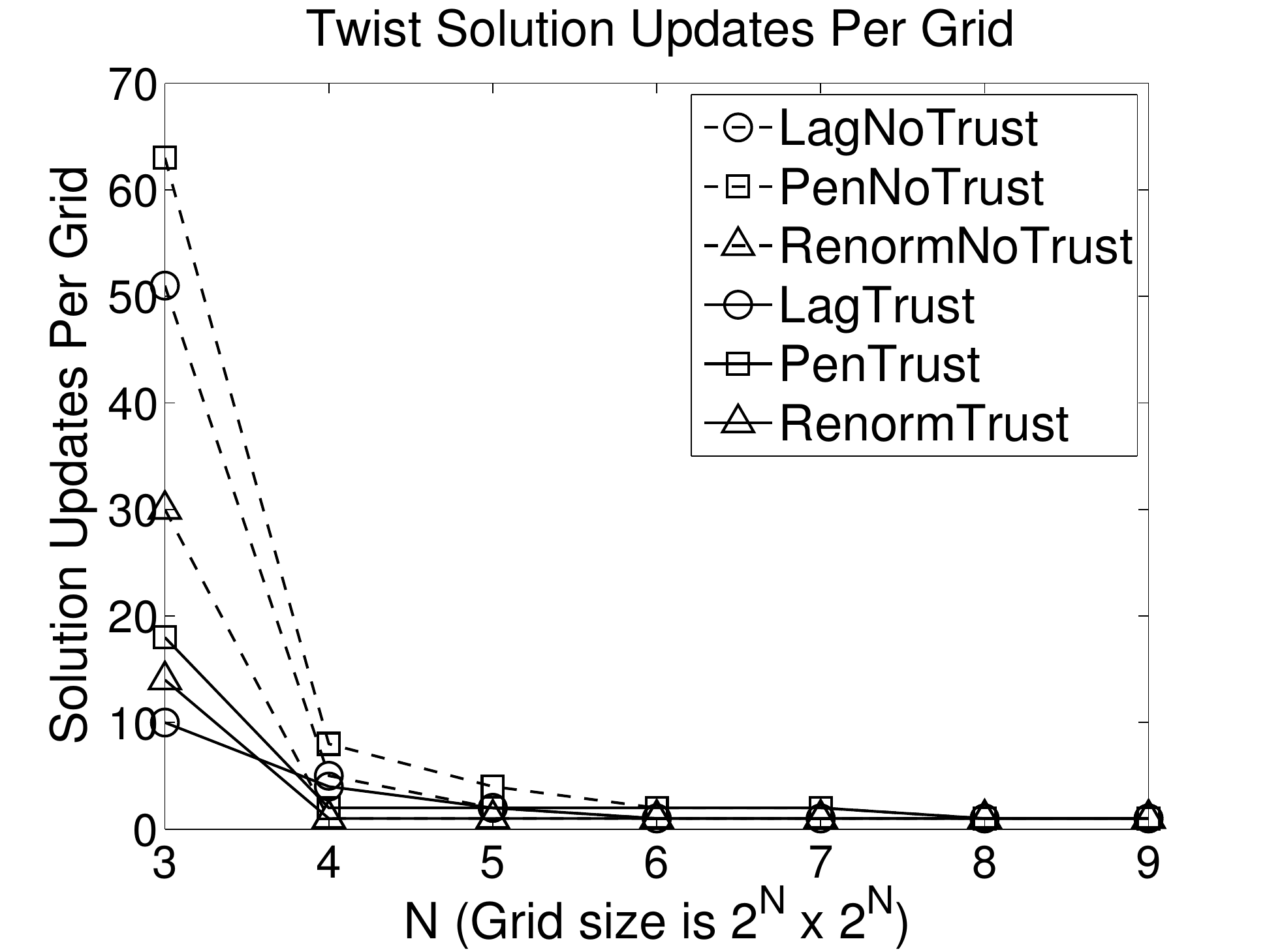}
  \caption{}
  \label{fig:TwistNIFig}
\end{subfigure}
\begin{subfigure}[b]{.49 \textwidth}
\raggedright
  \includegraphics[scale=.31]{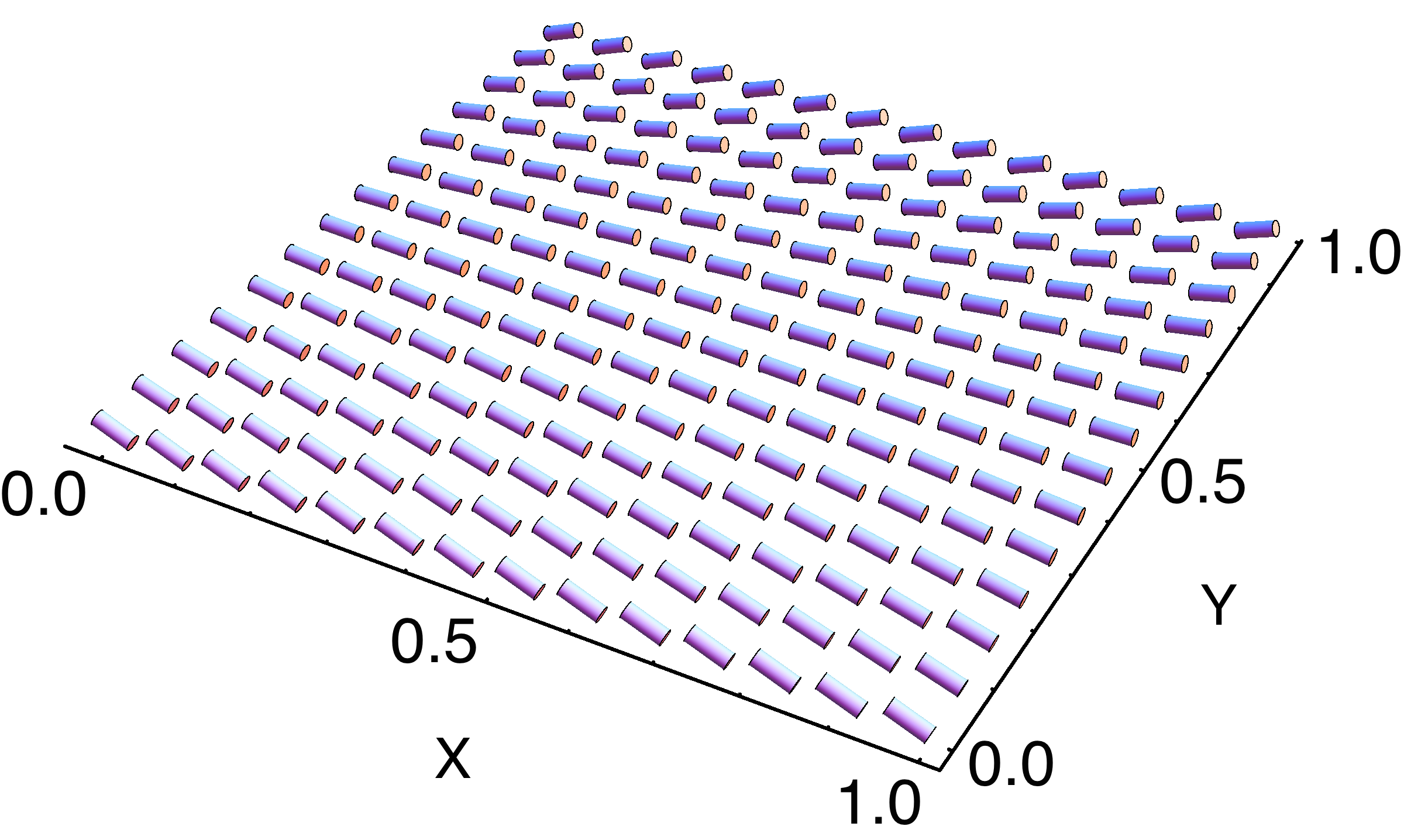}
  \caption{}
  \label{fig:TwistHelv}
\end{subfigure}
\caption{\small{(\subref{fig:TwistNIFig}) Number of iterations required to reach iteration tolerance for each method with NI. The penalty weight for the penalty formulation was $\zeta = 1000$. Only the 2D-subspace minimization trust-region approach is displayed, as the behavior of simple trust regions is similar. (\subref{fig:TwistHelv}) The final computed solution for the Lagrangian formulation on a $512 \times 512$ mesh (restricted for visualization).}}
\label{TwistNIFigure}
\end{figure}

\begin{table}[h!]
{\small
\begin{minipage}[t]{.49 \linewidth}
\begin{tabular}[t]{|c|c|c|c|}
\hline
\multicolumn{4}{|c|}{Lagrangian}\\
\hlinewd{1.3pt}
\multicolumn{2}{|c|}{Method} & Solve Cost & Run Time\\
\hline
No NI & No TR & $61$ & $17,975$s \\
\hline
NI & No TR& $1.350$ & $550$s \\
\hline
No NI &TR & $10$ & $3,071$s  \\
\hline
NI & TR & $1.340$ & $548$s \\
\hlinewd{1.3pt}
\multicolumn{4}{|c|}{Renormalization Penalty: $\zeta = 10^5$}\\
\hlinewd{1.3pt}
\multicolumn{2}{|c|}{Method} & Solve Cost & Run Time\\
\hline
No NI & No TR & $38$ & $11,838$s\\
\hline
NI & No TR & $8.113$ & $2,272$s\\
\hline
No NI & TR & $29$ & $9,172$s \\
\hline
No NI & TR 2D & $32$ & $10,147$s \\
\hline
NI & TR & $1.364$ & $585$s \\
\hline
NI & TR 2D & $1.340$ & $584$s \\
\hline
\end{tabular}
\end{minipage}
\begin{minipage}[t]{.49 \linewidth}
\begin{tabular}[t]{|c|c|c|c|}
\hline
\multicolumn{4}{|c|}{Unmodified Penalty: $\zeta = 10^5$}\\
\hlinewd{1.3pt}
\multicolumn{2}{|c|}{Method} & Solve Cost & Run Time\\
\hline
No NI & No TR & $142$ & $41,013$s\\
\hline
NI & No TR & $1.474$ & $593$s\\
\hline
No NI & TR & $63$ & $18,425$s \\
\hline
No NI & TR 2D & $64$ & $19,287$s \\
\hline
NI & TR & $1.426$ & $569$s \\
\hline
NI & TR 2D & $1.424$ & $574$s \\
\hlinewd{1.3pt}
\multicolumn{4}{|c|}{Unmodified Penalty: $\zeta = 10^9$}\\
\hlinewd{1.3pt}
\multicolumn{2}{|c|}{Method} & Solve Cost & Run Time\\
\hline
No NI & No TR & $-$ & $-$\\
\hline
NI & No TR& $-$ & $-$\\
\hline
No NI & TR &$1016$ & $294,349$s \\
\hline
No NI & TR 2D & $1736$ & $511,874$s \\
\hline
NI & TR & $1.639$ & $641$s \\
\hline
NI &TR 2D & $1.958$ & $764$s \\
\hline
\end{tabular}
\end{minipage}
}
\caption{\small{Twist statistics comparison for NI and trust region combinations. The solve cost column displays an approximation of the work in WUs for the corresponding method. The overall time to solution is also presented. Dashes in the columns indicate divergence.}}
\label{TwistWUTable}
\end{table}

Table \ref{TwistWUTable} summarizes both the efficiency of nested iteration and highlights the strengths of certain applications of trust-region methods. For all of the constraint enforcement formulations, nested iteration offers very clear cost improvements. Coupling nested iteration with the Lagrange multiplier method for this problem is quite powerful, yielding the fastest overall run time and highest accuracy. Trust regions have a clear impact on time to solution in the absence of nested iteration but offer modest time to solution improvements when coupled with NI. 

If the penalty method is used, pairing nested iteration with trust regions increases robustness and cost consistency. For example, the use of trust regions for the unmodified penalty method with $\zeta = 10^9$ overcomes prominent divergence issues. In addition to the improved error performance in Table \ref{TwistBCFormulationRenormPenaltyTRs}, for $\zeta = 10^5$, the renormalization penalty method is generally faster than the unmodified penalty approach with the same penalty weight. The slightly slower overall run times when NI is paired with trust regions, in comparison with the unmodified penalty method, are due to the work involved in normalizing the director after each iteration. As discussed above, a shortcoming of the renormalization penalty method is sensitivity to parameter choice. 

\subsection{Tilt-Twist Equilibrium Configuration}

For this problem, $\director$ retains the form in \eqref{sphericalRepresentation} and the same boundary conditions are applied with $\theta_0 = \frac{\pi}{4}$ and Frank constants of $K_1 = 1.0$, $K_2 = 3.0$, and $K_3 = 1.2$. Twist solutions incorporating a nonplanar tilt deviating from parallel alignment with the $xz$-plane are investigated in \cite{Leslie2, Leslie3}. It is shown that nonplanar twist solutions become available at a computable threshold. This threshold is satisfied for the chosen parameters. The analytical, energy-minimizing, tilt-twist solution is defined implicitly for a rotated coordinate system in \cite{Stewart1, Leslie2}. The associated analytical, free-elastic energy for the chosen parameters is $3.59294$.

For the tilt-twist equilibrium solution, the incomplete Newton stepping approach converged for all of the penalty weights considered. Table \ref{TiltTwistBCFormulationComparisons} details the statistics for the unmodified penalty method compared with the Lagrange multiplier method. Again, the Lagrange multiplier method outperforms the penalty method in each category. The free energy of the Lagrange multiplier method is not obtained by the penalty method until $\zeta$ reaches $10^8$. 

It should be noted that the behavior of the error for the Lagrangian method, as well as the penalty method for weights greater than $10^7$, is affected by the implicit definition of the true solution. The analytical solution for the tilt-twist equilibrium configuration is implicitly defined by a complicated set of equations, which are solved approximately at the appropriate quadrature points using Mathematica. Solving these equations involves successive root finding for complicated integral equations where the unknowns are limits of integration. Approximation error creates an artificial limit for the computed error at accuracies smaller than $10^{-7}$.

\begin{table}[h!]
\centering
{\small
\begin{tabular}{|c|c|c|c|c|c|c|}
\hline
Type & Free Energy & $L^2$-error & Min. Dev. & Max Dev. & Cost & TR Cost\\
\hlinewd{1.3pt}
Lagrangian & $3.59294$ & $4.717$e-07& $-7.89$e-10 & $7.88$e-10 & $1.463$ & $1.447$\\
\hlinewd{1.3pt}
Pen. $\zeta = 10^1$ & $2.15620$ & $4.403$e-01 & $-4.78$e-01 & $-2.62$e-04 & $1.458$ & $1.333$\\
\hline
Pen. $\zeta = 10^2$ & $3.38037$ & $4.597$e-02 & $-5.01$e-02 & $-1.17$e-04 & $1.732$ & $1.665$\\
\hline
Pen. $\zeta = 10^3$ & $3.56953$ & $4.565$e-03 & $-4.97$e-03 & $-3.88$e-05 & $1.732$ & $1.665$ \\
\hline
Pen. $\zeta = 10^4$ & $3.59052$ & $4.606$e-04 & $-5.00$e-04 & $-1.21$e-05 & $2.735$ & $2.665$ \\
\hline
Pen. $\zeta = 10^5$ & $3.59269$ & $4.590$e-05 & $-5.00$e-05 & $-3.56$e-06 & $2.743$ & $2.667$ \\
\hline
Pen. $\zeta = 10^6$ & $3.59291$ & $4.253$e-06 & $-5.01$e-06 & $-8.05$e-07 & $2.782$ & $2.678$ \\
\hline
Pen. $\zeta = 10^7$ & $3.59293$ & $2.735$e-07 & $-5.83$e-07 & $-1.14$e-07 & $2.809$ & $2.723$ \\
\hline
Pen. $\zeta = 10^8$ & $3.59294$ & $4.340$e-07 & $-6.00$e-08 & $-1.22$e-08 & $2.885$ & $2.747$ \\
\hline
Pen. $\zeta = 10^9$ & $3.59294$ & $4.676$e-07 & $-6.01$e-09 & $-1.24$e-09 & $3.218$ & $2.879$ \\
\hline
\end{tabular}
}
\caption{\small{Statistics for the tilt-twist equilibrium solution with the different formulations and penalty weights. Included is the system free energy, the computed $L^2$-error on the finest grid, and the minimum and maximum deviations from unit director length at the quadrature nodes. Approximations of the cost in WUs for the corresponding method with no trust regions and simple trust regions are included.}}
\label{TiltTwistBCFormulationComparisons}
\end{table}

\begin{table}[h!]
\centering
{\small
\begin{tabular}{|c|c|c|c|c|c|c|}
\hline
& \multicolumn{2}{|c|}{No Trust Region} & \multicolumn{2}{|c|}{Simple Trust Region} & \multicolumn{2}{|c|}{2D Trust Region} \\
\hline
Type & $L^2$-error & Cost & $L^2$-error & Cost & $L^2$-error & Cost \\
\hline
Pen. $\zeta = 10^1$ & $4.354$e-01 & $1.384$ & $4.319$e-01  & $1.337$ & $4.424$e-01 & $1.377$ \\
\hline
Pen. $\zeta = 10^2$ & $3.691$e-02 & $1.335$ & $3.635$e-02 & $1.333$ & $3.578$e-02 & $1.333$ \\
\hline
Pen. $\zeta = 10^3$ & $4.708$e-03 & $1.335$ & $4.533$e-03 & $1.332$ & $4.493$e-03 & $1.332$ \\
\hline
Pen. $\zeta = 10^4$ & $1.085$e-03 & $1.335$ & $8.662$e-04 & $1.332$ & $8.536$e-04 & $1.333$ \\
\hline
Pen. $\zeta = 10^5$ & $1.650$e-03 & $1.336$ & $9.487$e-04 & $1.333$ & $7.012$e-04 & $1.333$ \\
\hline
Pen. $\zeta = 10^6$ & $9.414$e-01 & $87.362$ & $5.375$e-04 & $1.341$ & $7.344$e-04 & $1.341$ \\
\hline
\end{tabular}
}
\caption{\small{A comparison of renormalization penalty methods, with and without trust-region approaches, for the tilt-twist solution. For each algorithm, the computed $L^2$-error on the finest grid and an approximation of the cost in WUs is included.}}
\label{TiltTwistBCFormulationRenormPenaltyTRs}
\end{table}

Considering Tables \ref{TiltTwistBCFormulationRenormPenaltyTRs} and \ref{TiltTwistBCFormulationComparisonsRenormPenalty}, the renormalization penalty method does not perform as well as in the previous problem. Note that Table \ref{TiltTwistBCFormulationComparisonsRenormPenalty} details additional statistics for the 2D-subspace minimization trust-region approach discussed in Table \ref{TiltTwistBCFormulationRenormPenaltyTRs}. Compared to the unmodified penalty method, computational costs remain steadier, with the exception of the run without trust regions and a penalty weight of $10^6$, and adherence to the unit-length constraint is improved. However, the method fails to reach an equivalent accuracy before performance degrades. As with the simpler twist problem, performance of the renormalization method is sensitive to an appropriate choice of penalty weight. 

\begin{table}[h!]
\centering
{\small
\begin{tabular}{|c|c|c|c|c|c|}
\hline
Type & Free Energy & $L^2$-error & Min. Dev. & Max Dev. & 2D TR Cost \\
\hline
Pen. $\zeta = 10^1$ & $3.92827$ & $4.424$e-01 & $-4.62$e-09 & $4.62$e-09 & $1.377$ \\
\hline
Pen. $\zeta = 10^2$ & $3.59611$ & $3.578$e-02 & $-1.15$e-09 & $1.14$e-09 & $1.333$ \\
\hline
Pen. $\zeta = 10^3$ & $ 3.59298$ & $4.493$e-03 & $-7.77$e-10 & $7.76$e-10 & $1.332$  \\
\hline
Pen. $\zeta = 10^4$ & $3.59294$ & $8.536$e-04 & $-7.87$e-10 & $7.85$e-10 & $1.333$ \\
\hline
Pen. $\zeta = 10^5$ & $3.59294$ & $7.012$e-04 & $-7.91$e-10 & $7.90$e-10 & $1.333$ \\
\hline
Pen. $\zeta = 10^6$ & $3.59294$ & $7.344$e-04 & $-7.87$e-10 & $7.86$e-10 & $1.341$ \\
\hline
\end{tabular}
}
\caption{\small{Statistics for the tilt-twist equilibrium solution with varying penalty weights. Here, the penalty method with renormalization and 2D-subspace minimization is shown. Included is the system free energy, the computed $L^2$-error on the finest grid, the minimum and maximum deviations from unit director length at the quadrature nodes, and an approximation of the cost in WUs for the corresponding method.}}
\label{TiltTwistBCFormulationComparisonsRenormPenalty}
\end{table}

The method with renormalization does find the true free energy at a lower penalty weight than the approach without renormalization. At a penalty weight of $\zeta = 10^4$, the penalty method without renormalization has a slightly lower error measure, but has not accurately matched the true energy. While the unmodified method more accurately resolves the orientation of the director in comparison with the renormalization method, it slightly shrinks the director length to attain the moderately smaller free energy.

Figure \ref{fig:TiltNIFig} presents similar behavior to Figure \ref{fig:TwistNIFig}, in that trust regions productively reduce the number of iterations on the coarsest grids but have less effect on iteration counts on the finest grids. This is again due to the efficiency of nested iteration. Figure \ref{fig:TiltHelv} displays the solution computed by the Lagrange multiplier approach. For the unmodified penalty method with NI and a penalty weight of $\zeta = 10^9$, trust regions only reduce the computational cost from $3.218$ WUs to $2.879$ WUs. This results in only an $8.9\%$ decrease in overall time to solution.

\begin{figure}[h!]
\centering
\begin{subfigure}[b]{.49 \textwidth}
\raggedleft
  \includegraphics[scale=.32]{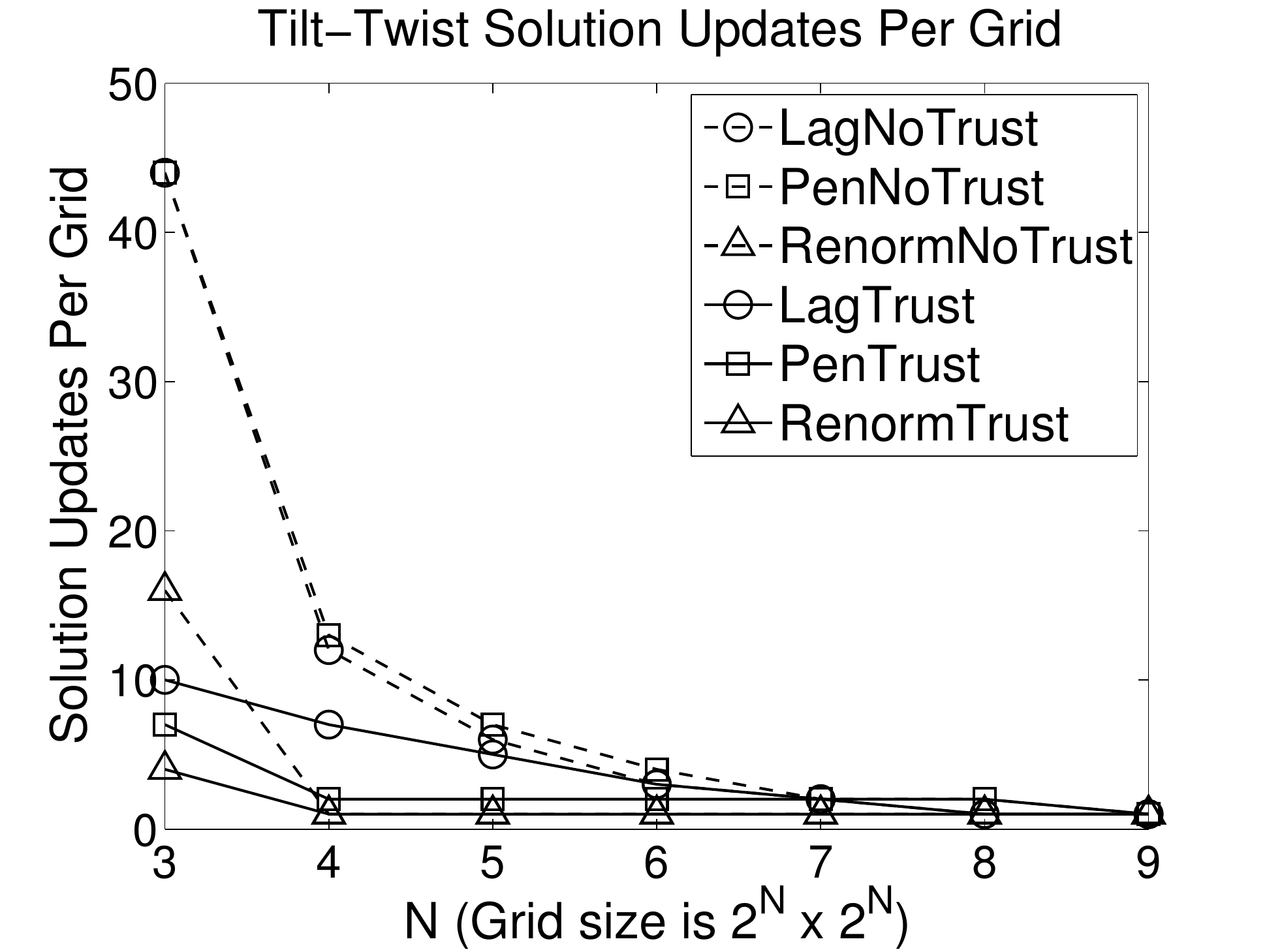}
  \caption{}
  \label{fig:TiltNIFig}
\end{subfigure}
\begin{subfigure}[b]{.49 \textwidth}
\raggedright
  \includegraphics[scale=.31]{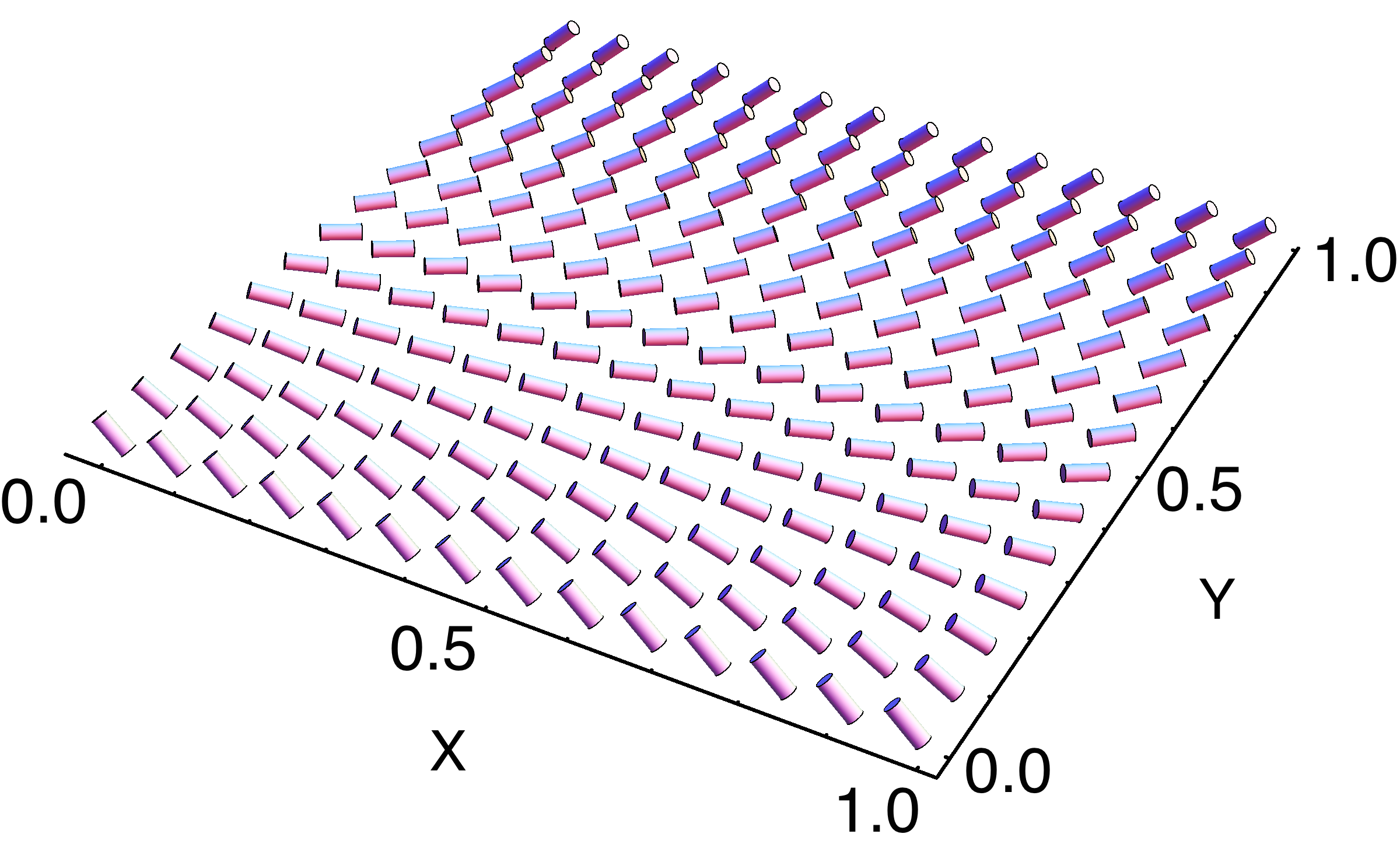}
  \caption{}
  \label{fig:TiltHelv}
\end{subfigure}
\caption{\small{(\subref{fig:TiltNIFig}) Number of iterations required to reach iteration tolerance for each method with NI. The penalty weight for the penalty formulation was $\zeta = 1000$. Only the 2D-subspace minimization trust-region approach is displayed, as the behavior of simple trust regions is similar. (\subref{fig:TiltHelv}) The final computed solution for the Lagrangian formulation on a $512 \times 512$ mesh (restricted for visualization).}}
\label{TiltNIFigure}
\end{figure} 

\begin{table}[h!]
{\small
\begin{minipage}[t]{.49 \linewidth}
\begin{tabular}[t]{|c|c|c|c|}
\hlinewd{1.3pt}
\multicolumn{4}{|c|}{Lagrangian}\\
\hlinewd{1.3pt}
\multicolumn{2}{|c|}{Method} & Solve Cost & Run Time\\
\hline
No NI & No TR & $33$ & $9,853$s \\
\hline
NI & No TR& $1.463$ & $584$s \\
\hline
No NI & TR & $9$ & $2,812$s  \\
\hline
NI & TR & $1.447$ & $579$s  \\
\hlinewd{1.3pt}
\multicolumn{4}{|c|}{Renormalization Penalty: $\zeta = 10^5$}\\
\hlinewd{1.3pt}
\multicolumn{2}{|c|}{Method} & Solve Cost & Run Time\\
\hline
No NI & No TR & $16$ & $5,119$s\\
\hline
NI & No TR & $1.336$ & $586$s\\
\hline
No NI & TR & $18$ & $5,658$s \\
\hline
No NI & TR 2D & $18$ & $5,817$s \\
\hline
NI & TR & $1.333$ & $575$s \\
\hline
NI & TR 2D & $1.333$ & $591$s \\
\hline
\end{tabular}
\end{minipage}
\begin{minipage}[t]{.49 \linewidth}
\begin{tabular}[t]{|c|c|c|c|}
\hlinewd{1.3pt}
\multicolumn{4}{|c|}{Unmodified Penalty $\zeta = 10^5$}\\
\hlinewd{1.3pt}
\multicolumn{2}{|c|}{Method} & Solve Cost & Run Time\\
\hline
No NI & No TR & $39$ & $11,606$s\\
\hline
NI & No TR & $2.743$ & $939$s\\
\hline
No NI & TR & $22$ & $6,598$s \\
\hline
No NI & TR 2D & $22$ & $6,680$s \\
\hline
NI &TR & $2.667$ & $920$s  \\
\hline
NI & TR 2D & $2.667$ & $949$s  \\
\hline
\end{tabular}
\end{minipage}
}
\caption{\small{Tilt-twist statistics comparison for NI and trust region combinations. The solve cost column displays an approximation of the work in WUs for the corresponding method. The overall time to solution is also presented.}}
\label{TiltTwistWUTable}
\end{table}

As shown in Table \ref{TiltTwistWUTable}, improvements from the trust regions for the Lagrange multiplier method are minor, decreasing computational costs from $1.463$ WUs to $1.447$ WUs and reducing overall time to solution by only $0.82\%$. Here, the renormalization penalty method is faster than the unmodified approach and, in some cases, even slightly outpaces the Lagrange multiplier formulation. However, as shown in Tables \ref{TiltTwistBCFormulationComparisons} and \ref{TiltTwistBCFormulationComparisonsRenormPenalty} the associated error convergence is not comparable. In Table \ref{TiltTwistWUTable}, results for $\zeta = 10^9$ are not reported due to untenably large run times without nested iteration. 

\subsection{Nano Patterned Boundary Conditions}

In this numerical experiment, we use Frank constants $K_1 = 1.0$, $K_2 = 0.62903$, and $K_3 = 1.32258$. The applied boundary conditions are the same as those used for the final experiment in \cite{Emerson1}. Letting $r = 0.25$ and $s = 0.95$, the boundary conditions are defined as
\begin{align}
n_1 &= 0, \label{nano1} \\
n_2 &= \cos\big(r(\pi + 2 \tan^{-1}(X_m) -2 \tan^{-1}(X_p))\big), \\
n_3 &= \sin\big(r(\pi + 2 \tan^{-1}(X_m) -2 \tan^{-1}(X_p))\big), \label{nano2}
\end{align}
where $X_m=\frac{-s\sin(2\pi(x+r))}{-s\cos(2\pi(x+r))-1}$ and $X_p = \frac{-s\sin(2\pi(x+r))}{-s\cos(2\pi(x+r))+1}$. These boundary conditions are pictured in Figure \ref{NanoNIFigure}\subref{fig:NanoHelv}. The result is a sharp transition from vertical nematics to planar-aligned rods followed by a rapid transition back to vertical alignment. Such boundary conditions produce configuration distortions throughout the interior of the domain and are important in physical applications \cite{Atherton1, Atherton2, Emerson2}. Due to this complexity, no analytical solution currently exists.

The more complicated nature of the nano-patterned boundary conditions is reflected in the data of Table \ref{NanoBCFormulationComparisons}. The overall approximate costs for the methods with and without trust regions are larger than previous examples and the unit-length constraint is more difficult to capture. Nonetheless, the Lagrange multiplier method provides an accurate and cost effective approach. The penalty method without trust regions diverges for penalty weights greater than $\zeta = 10^4$. At higher penalty weights, even the trust-region approach suffers jumps in computational costs. At $\zeta = 10^9$, the system becomes over constrained and accuracy begins to degrade. Hence, results for this weight are not included.

\begin{table}[h!]
\centering
{\small
\begin{tabular}{|c|c|c|c|c|c|}
\hline
Type & Free Energy & Min. Dev. & Max Dev. & Cost & TR Cost\\
\hlinewd{1.3pt}
Lagrangian & $3.89001$ & $-6.92$e-05 & $5.89$e-05 & $2.864$ & $2.779$\\
\hlinewd{1.3pt}
Pen. $\zeta = 10^1$ & $3.83657$ & $-8.84$e-02 & $1.96$e-03 & $2.864$ & $2.748$ \\
\hline
Pen. $\zeta = 10^2$ & $3.86896$ & $-4.01$e-02 & $4.40$e-03 & $2.864$ & $2.749$\\
\hline
Pen. $\zeta = 10^3$ & $3.88331$ & $-1.80$e-02 & $7.32$e-03 & $2.868$ & $2.749$ \\
\hline
Pen. $\zeta = 10^4$ & $3.88819$ & $-6.58$e-03 & $5.81$e-03 & $2.886$ & $2.757$\\
\hline
Pen. $\zeta = 10^5$ & $3.88965$  & $-1.60$e-03 & $2.01$e-03 & $-$ & $2.805$\\
\hline
Pen. $\zeta = 10^6$ & $3.88996$  & $-2.90$e-04 & $4.55$e-04 & $-$ &$3.736$\\
\hline
Pen. $\zeta = 10^7$ & $3.89001$  & $-7.92$e-05 & $1.01$e-04 & $-$ & $4.797$\\
\hline
Pen. $\zeta = 10^8$ & $3.89001$ & $-6.76$e-05 & $5.83$e-05 & $-$ & $22.328$\\
\hline
\end{tabular}
}
\caption{\small{Statistics for the nano-patterned equilibrium solution with the different formulations and penalty weights. Included is the system free energy and the minimum and maximum deviations from unit director length at the quadrature nodes. Approximations of the cost in WUs for the corresponding method with no trust regions and simple trust regions are included. Dashes in the columns indicate divergence.}}
\label{NanoBCFormulationComparisons}
\end{table}

As with the tilt-twist equilibrium solution, the renormalization penalty method approaches the Lagrangian formulation's free energy and unit-length constraint bounds earlier than the unmodified penalty method. It also yields a lower computational cost for most penalty weights. However, as was seen in the tilt-twist data, matching the energy earlier than the unmodified penalty approach does not directly indicate higher accuracy in resolving the correct orientation of the director. Moreover, in Table \ref{NanoBCFormulationRenormPenaltyTRs}, divergence issues are apparent for the renormalization method at high penalty weights. 

When considered with Table \ref{NanoBCFormulationComparisons}, Table \ref{NanoBCFormulationRenormPenaltyTRs} reinforces the conclusion that trust regions positively influence the robustness of penalty method approaches. While the simple trust-region approach works most effectively for the non-renormalization penalty method, the 2D-subspace minimization approach is more favorable for the renormalization penalty formulation. Note that Table \ref{NanoBCFormulationComparisonsRenormPenalty} details additional statistics for the 2D-subspace minimization trust-region approach discussed in Table \ref{NanoBCFormulationRenormPenaltyTRs}.

\begin{table}[h!]
\centering
{\small
\begin{tabular}{|c|c|c|c|c|c|c|}
\hline
& \multicolumn{2}{|c|}{No Trust Region} & \multicolumn{2}{|c|}{Simple Trust Region} & \multicolumn{2}{|c|}{2D Trust Region} \\
\hline
Type & Free Energy & Cost & Free Energy & Cost & Free Energy & Cost \\
\hline
Pen. $\zeta = 10^1$ & $3.89319$ & $1.680$ & $3.89351$  & $1.518$ & $3.89308$ & $1.686$ \\
\hline
Pen. $\zeta = 10^2$ & $3.89049$ & $1.681$ & $3.89051$ & $1.666$ & $3.89049$& $1.666$ \\
\hline
Pen. $\zeta = 10^3$ & $3.89006$ & $1.682$ & $3.89006$ & $1.666$ & $3.89006$& $1.666$ \\
\hline
Pen. $\zeta = 10^4$ & $3.89002$ & $1.683$ & $3.89002$ & $1.669$ & $3.89002$ & $1.669$ \\
\hline
Pen. $\zeta = 10^5$ & $-$ & $-$ & $3.89001$& $2.133$ & $3.89001$ & $2.433$ \\
\hline
Pen. $\zeta = 10^6$ & $-$ & $-$ & $-$ & $-$ & $3.89001$ & $5.418$ \\
\hline
\end{tabular}
}
\caption{\small{A comparison of renormalization penalty methods, with and without trust-region approaches, for the nano-pattern solution. For each algorithm, the computed free energy on the finest grid and an approximation of the cost in WUs is included. Dashes in the columns indicate divergence.}}
\label{NanoBCFormulationRenormPenaltyTRs}
\end{table}

\begin{table}[h!]
\centering
{\small
\begin{tabular}{|c|c|c|c|c|}
\hline
Type & Free Energy & Min. Dev. & Max Dev. & 2D TR Cost \\
\hline
Pen. $\zeta = 10^1$ & $3.89308$ & $-7.06$e-05 & $6.02$e-05 & $1.686$ \\
\hline
Pen. $\zeta = 10^2$ & $3.89049$ & $-7.07$e-05 & $6.02$e-05 & $1.666$ \\
\hline
Pen. $\zeta = 10^3$ & $ 3.89006$ & $-7.09$e-05 & $6.01$e-05 & $1.666$  \\
\hline
Pen. $\zeta = 10^4$ & $3.89002$ & $-7.09$e-05 & $6.01$e-05 & $1.669$ \\
\hline
Pen. $\zeta = 10^5$ & $3.89001$ & $-7.08$e-05 & $6.00$e-05 & $2.433$ \\
\hline
Pen. $\zeta = 10^6$ & $3.89001$ & $-7.07$e-05 & $5.98$e-05 & $5.418$ \\
\hline
\end{tabular}
}
\caption{\small{Statistics for the nano equilibrium solution with the different formulations and penalty weights. Here, the penalty method with renormalization and 2D-subspace minimization is used. Included is the system free energy, the minimum and maximum deviations from unit director length at the quadrature nodes, and an approximation of the cost in WUs for the corresponding method.}}
\label{NanoBCFormulationComparisonsRenormPenalty}
\end{table}

Figure \ref{fig:NanoNIFig} displays the iteration counts as a function of grid size for both the Lagrange multiplier formulation and the penalty method, with and without renormalization, at a penalty weight of $\zeta = 10^3$. The solution computed by the Lagrange multiplier method is shown in Figure \ref{fig:NanoHelv}. Similar to the previous problems, trust regions reduce iteration counts on the coarsest grids with reduced efficacy at the finer levels, due to NI. The cost savings from trust regions within a nested iteration scheme are slightly higher for this problem but persist as small improvements overall.

\begin{figure}[h!]
\centering
\begin{subfigure}[b]{.49 \textwidth}
\raggedleft
  \includegraphics[scale=.32]{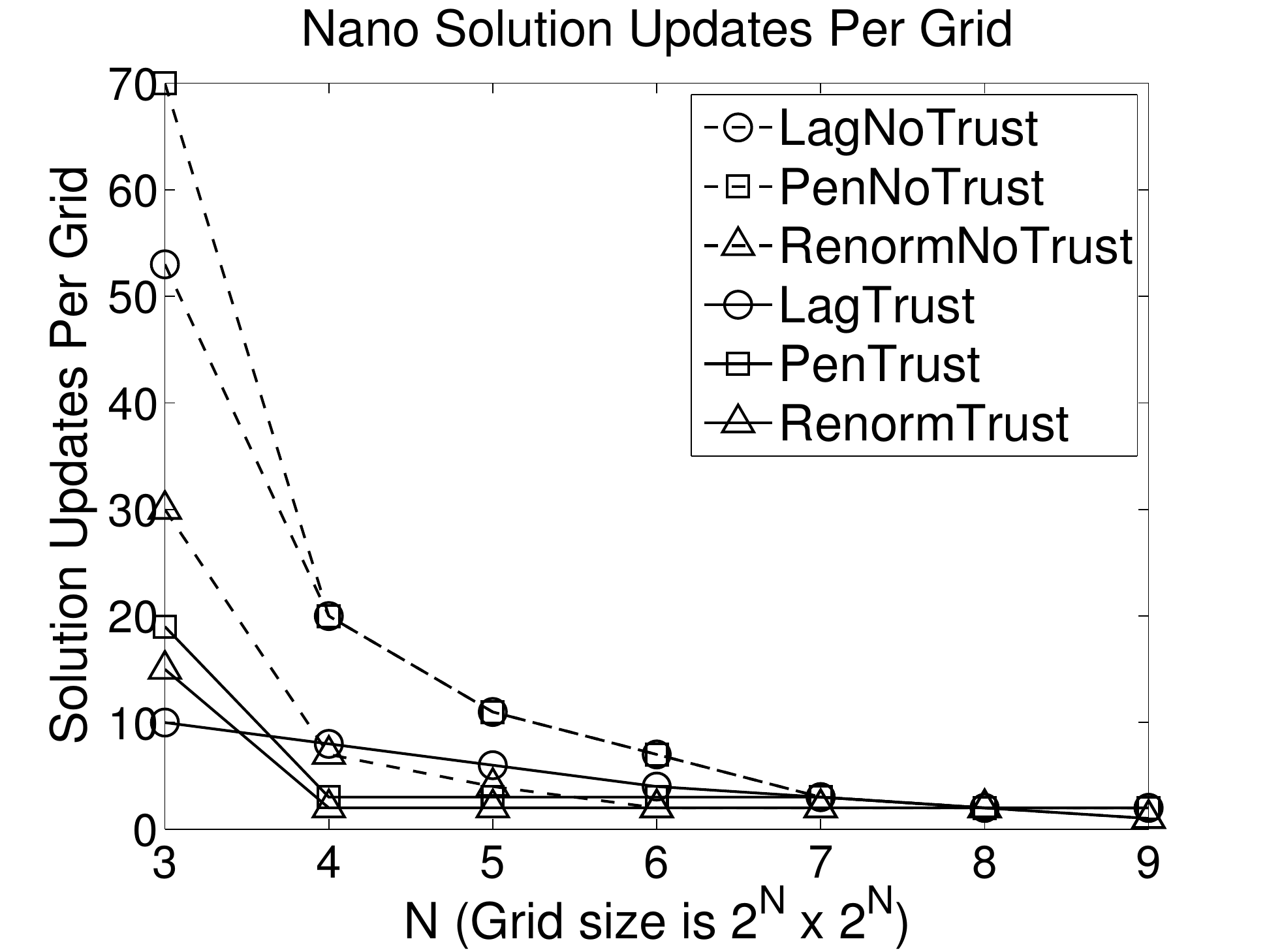}
  \caption{}
  \label{fig:NanoNIFig}
\end{subfigure}
\begin{subfigure}[b]{.49 \textwidth}
\raggedright
  \includegraphics[scale=.31]{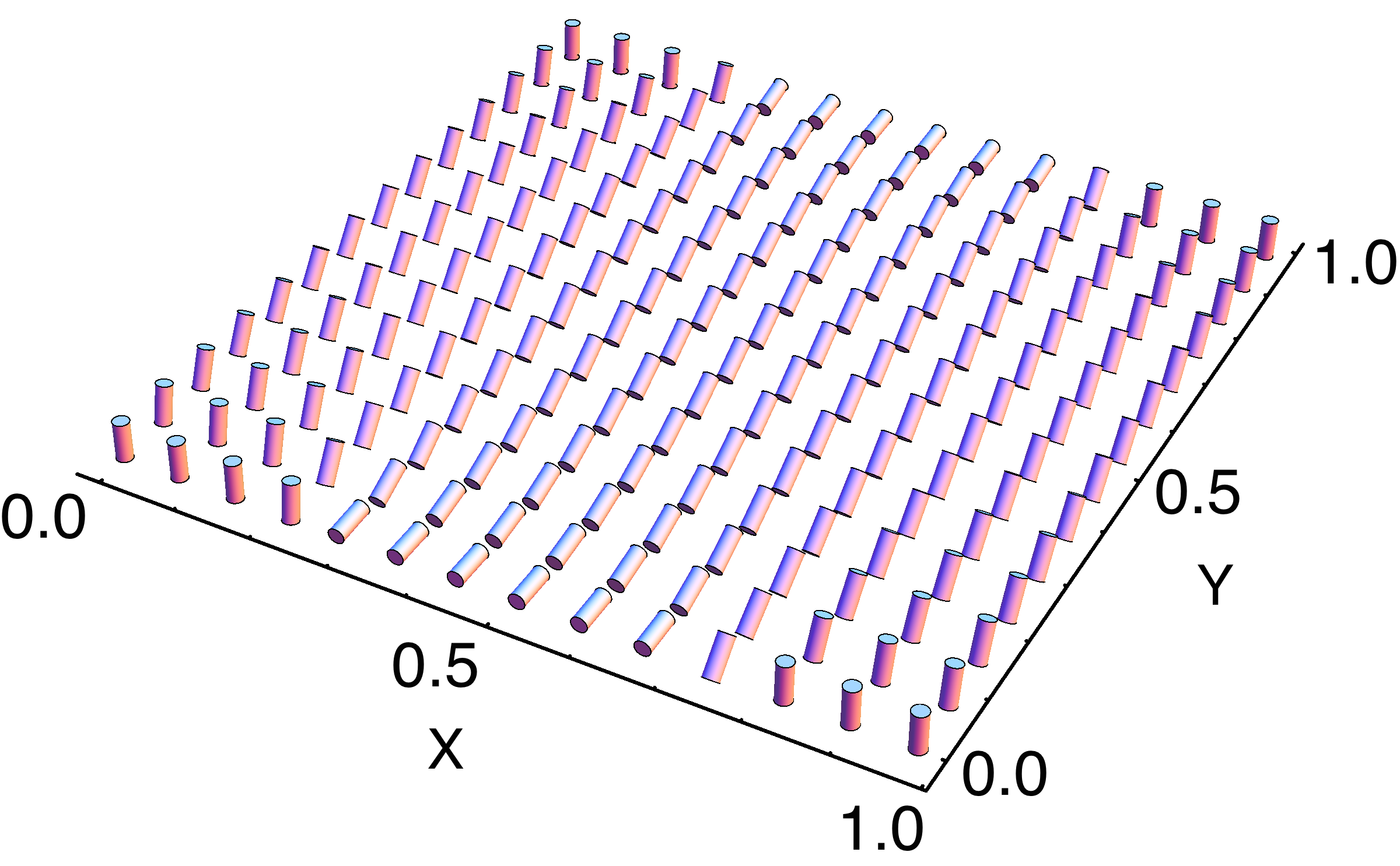}
  \caption{}
  \label{fig:NanoHelv}
\end{subfigure}
\caption{\small{(\subref{fig:NanoNIFig}) Number of iterations required to reach iteration tolerance for each method with NI. The penalty weight for the penalty formulation was $\zeta = 1000$. Only the 2D-subspace minimization trust-region approach is displayed, as the behavior of simple trust regions is similar. (\subref{fig:NanoHelv}) The final computed solution for the Lagrangian formulation on a $512 \times 512$ mesh (restricted for visualization).}}
\label{NanoNIFigure}
\end{figure}

Table \ref{NanoWUTable} reiterates the efficacy of nested iteration for efficient computation and trust regions for robustness. For this problem, Table \ref{NanoWUTable} also shows that the renormalization penalty method with nested iteration and trust regions has a somewhat shorter overall run time than that of the Lagrange multiplier approach. Moreover, the renormalization approach matches the free energy and unit-length conformance of the Lagrangian formulation, see Tables \ref{NanoBCFormulationComparisons} and \ref{NanoBCFormulationComparisonsRenormPenalty}. However, while the overall run time and approximate cost of the approach is slightly larger, the accuracy of the Lagrange multiplier formulation is expected to be much higher. For the Lagrange multiplier approach, the $l_2$-norm of the first-order optimality conditions is $7.386$e-$13$, whereas the same measure for the renormalization penalty method is $1.603$e-$02$. 

\begin{table}[h!]
{\small
\begin{minipage}[t]{.49 \linewidth}
\begin{tabular}[t]{|c|c|c|c|}
\hlinewd{1.3pt}
\multicolumn{4}{|c|}{Lagrangian}\\
\hlinewd{1.3pt}
\multicolumn{2}{|c|}{Method} & Solve Cost & Run Time\\
\hline
No NI & No TR & $63$ & $18,861$s \\
\hline
NI & No TR& $2.864$ & $983$s \\
\hline
No NI & TR & $10$ & $3,113$s  \\
\hline
NI & TR & $2.779$ & $960$s  \\
\hlinewd{1.3pt}
\multicolumn{4}{|c|}{Renormalization Penalty: $\zeta = 10^5$}\\
\hlinewd{1.3pt}
\multicolumn{2}{|c|}{Method} & Solve Cost & Run Time\\
\hline
No NI & No TR & $35$ & $10,918$s\\
\hline
NI & No TR & $-$ & $-$\\
\hline
No NI & TR & $32$ & $9,893$s \\
\hline
No NI & TR 2D & $34$ & $10,976$s \\
\hline
NI & TR & $2.133$ & $789$s \\
\hline
NI & TR 2D & $2.433$ & $901$s \\
\hline
\end{tabular}
\end{minipage}
\begin{minipage}[t]{.49 \linewidth}
\begin{tabular}[t]{|c|c|c|c|}
\hlinewd{1.3pt}
\multicolumn{4}{|c|}{Unmodified Penalty $\zeta = 10^5$}\\
\hlinewd{1.3pt}
\multicolumn{2}{|c|}{Method} & Solve Cost & Run Time\\
\hline
No NI & No TR & $169$ & $49,654$s\\
\hline
NI & No TR & $-$ & $-$\\
\hline
No NI & TR & $73$ & $21,415$s \\
\hline
No NI & TR 2D& $75$ & $22,366$s \\
\hline
NI & TR & $2.805$ & $958$s  \\
\hline
NI & TR 2D& $3.530$ & $1,202$s  \\
\hline
\end{tabular}
\end{minipage}
}
\caption{\small{Nano-pattern statistics comparison for NI and trust region combinations. The solve cost column displays an approximation of the work in WUs for the corresponding method. The overall time to solution is also presented. Dashes in the columns indicate divergence.}}
\label{NanoWUTable}
\end{table}

In all of the experiments above, the accuracy per unit cost of the Lagrange multiplier method convincingly outperforms that of either of the penalty methods. Moreover, the experimental results imply that nested iteration should be used when considering any of the methods. While trust regions offer very slight improvements in computation time, they readily improve robustness of the penalty method. Due to their limited cost, it would be advantageous to include them for either method. The simple trust-region approach works best for the unmodified penalty method with stopping tolerances based on the first-order optimality conditions, whereas the 2D-subspace minimization trust regions are most effective for the renormalization penalty method with an energy reduction based stopping tolerance. Though larger penalty weights are generally necessary, the unmodified penalty method offers more consistent error reduction and performance with respect to an increasing weight.

\section{Multigrid Solver} \label{BraessSarazinSmoothing}

With the superiority of the Lagrange multiplier approach paired with nested iteration and trust regions established above, we demonstrate the full efficiency of the approach on a problem with electric and flexoelectric coupling as well as more complicated nano-patterened boundary conditions. In addition, a highly efficient, coupled multigrid method for the associated linear systems is introduced. The boundary conditions considered are a doubling of the nano-pattern described by Equations \eqref{nano1} - \eqref{nano2}, such that the pattern contains a second strip parallel to the $xy$-plane. While there is no applied electric field, the curvature induced by the nano-patterning generates an internal electric field due to the flexoelectric properties of the liquid crystals \cite{Meyer1, Emerson2}. The flexoelectric free energy with appropriate boundary conditions is given as
\begin{align*}
\mathcal{F}(\director, \phi) &= K_1 \Ltwonorm{\diverg \director}{\Omega}^2 + K_3\Ltwoinnerndim{\vec{Z} \curl \director}{\curl \director}{\Omega}{3} \nonumber \\
& \qquad - \epsilon_0\epsilon_{\perp}\Ltwoinnerndim{\nabla \phi}{\nabla \phi}{\Omega}{3} - \epsilon_0 \epsilon_a \Ltwoinner{\director \cdot \nabla \phi}{\director \cdot \nabla \phi}{\Omega} \nonumber \\
& \qquad + 2e_s \Ltwoinner{\diverg \director}{\director \cdot \nabla \phi}{\Omega} + 2e_b\Ltwoinnerndim{\director \times \curl \director}{\nabla \phi}{\Omega}{3}.
\end{align*}
The Frank constants used are $K_1=K_3=1.0$ and $K_2=4.0$, and the electric constants are set to $\epsilon_{\parallel} = 7.0$, $\epsilon_{\perp} = 7.0$, $\epsilon_a = 0$, and $\epsilon_0 = 1.42809$. For the flexoelectric constants, using Rudquist notation \cite{Rudquist1}, $e_s = 1.5$ and $e_b = -1.5$.

For a full derivation of the Lagrange multiplier approach extended to include electric and flexoelectric effects, see \cite{Emerson2}. Using an electric potential, $\phi$, the discretized and linearized system is written
\begin{equation*}
\mathcal{M} 
\left [ \begin{array}{c}
\director \\
\phi \\
\lambda
\end{array} \right] =
\left [ \begin{array}{c c c}
A & B_1 & B_2 \\
B_1^T & -D & \vec{0} \\
B_2^T & \vec{0} & \vec{0}
\end{array} \right ]
\left [ \begin{array}{c}
\director \\
\phi \\
\lambda
\end{array} \right]  = 
\left [ \begin{array}{c}
f_{\director} \\
f_{\phi} \\
f_{\lambda}
\end{array} \right].
\end{equation*}
Define blocks of $\mathcal{M}$ as
\begin{align}
\hat{A} = \left [ \begin{array}{c c}
A & B_1 \\
B_1^T & -D
\end{array} \right ], & &
\hat{B} = \left [ \begin{array}{c}
B_2 \\ \vec{0}
\end{array} \right ].
\end{align}
Furthermore, let $\hat{u} = [\director \text{ } \phi]^T$ and $\hat{f}_{\hat{u}} = [f_{\director} \text{ } f_{\phi}]^T$. With these block definitions, Braess-Sarazin relaxation, originally formulated in \cite{Braess2} for Stokes flows, is used and takes the form
\begin{equation} \label{BraessUpdate}
\left [\begin{array}{c}
\hat{u}_{k+1} \\
\lambda_{k+1}
\end{array} \right ] = 
\left [ \begin{array}{c}
\hat{u}_k \\
\lambda_k
\end{array} \right ] + 
\left [ \begin{array}{c c}
\gamma_b R & \hat{B} \\
\hat{B}^T & \vec{0}
\end{array} \right ]^{-1} \left (
\left [\begin{array}{c}
\hat{f}_{\hat{u}} \\
f_{\lambda}
\end{array} \right ] - 
\left [ \begin{array}{c c}
\hat{A} & \hat{B} \\
\hat{B}^T & \vec{0}
\end{array} \right ]
\left[ \begin{array}{c}
\hat{u}_k \\
\lambda_k
\end{array} \right ]
\right ),
\end{equation}
where $R$ is an appropriate preconditioner for $\hat{A}$ and $\gamma_b$ is a weighting parameter. For the multigrid approach, the matrix,
\begin{equation*}
\left [ \begin{array}{c c}
\gamma_b R & \hat{B} \\
\hat{B}^T & \vec{0}
\end{array} \right ],
\end{equation*}
is only approximately inverted, as in \cite{Benson1}. We use $Q_2$ elements for both $\director$ and $\phi$, and, hence, the degrees of freedom for the components of $\director$ and $\phi$ are collocated. As suggested in \cite{Benson1}, we construct the approximation, $R$, by extracting $4 \times 4$-blocks of $\hat{A}$ corresponding to the nodally collocated degrees of freedom for $\director$ and $\phi$. With careful permutation of the degrees of freedom in Equation \eqref{BraessUpdate}, $R$ becomes a block-diagonal matrix consisting of these $4 \times 4$ collocation blocks. 

Comparative studies of Braess-Sarazin-type relaxation schemes in a multigrid framework for Stokes flows are found in \cite{Larin1}, while numerical studies of their extension to multigrid methods for magnetohydrodynamic equations are performed in \cite{Benson1}. Here, as part of the underlying multigrid method, we use standard finite-element interpolation operators and Galerkin coarsening.

We first focus on determining the optimal value of the relaxation parameter. Here, and in the subsequent runs, the multigrid convergence tolerance, which is based on a ratio of the current solution's residual to that of the initial guess, remains fixed at $10^{-6}$ for each grid level and Newton step. The parameter $\gamma_b$ was varied from $1.10$ to $2.00$ in increments of $0.05$. Displayed in Figure \ref{BraessParameterStudy}\subref{fig:left} are the multigrid iteration counts, averaged over Newton iterations, for a $512 \times 512$ grid with respect to varying values of $\gamma_b$. The parameter study suggests that a $\gamma_b$ value of $1.20$ is optimal for convergence and, thus, we use this value. It is interesting to note that the iteration counts are relatively insensitive to increases in $\gamma_b$ above $1.20$.

\begin{figure}[h!]
\centering
\begin{subfigure}[b]{.49 \textwidth}
\raggedleft
  \includegraphics[scale=.35]{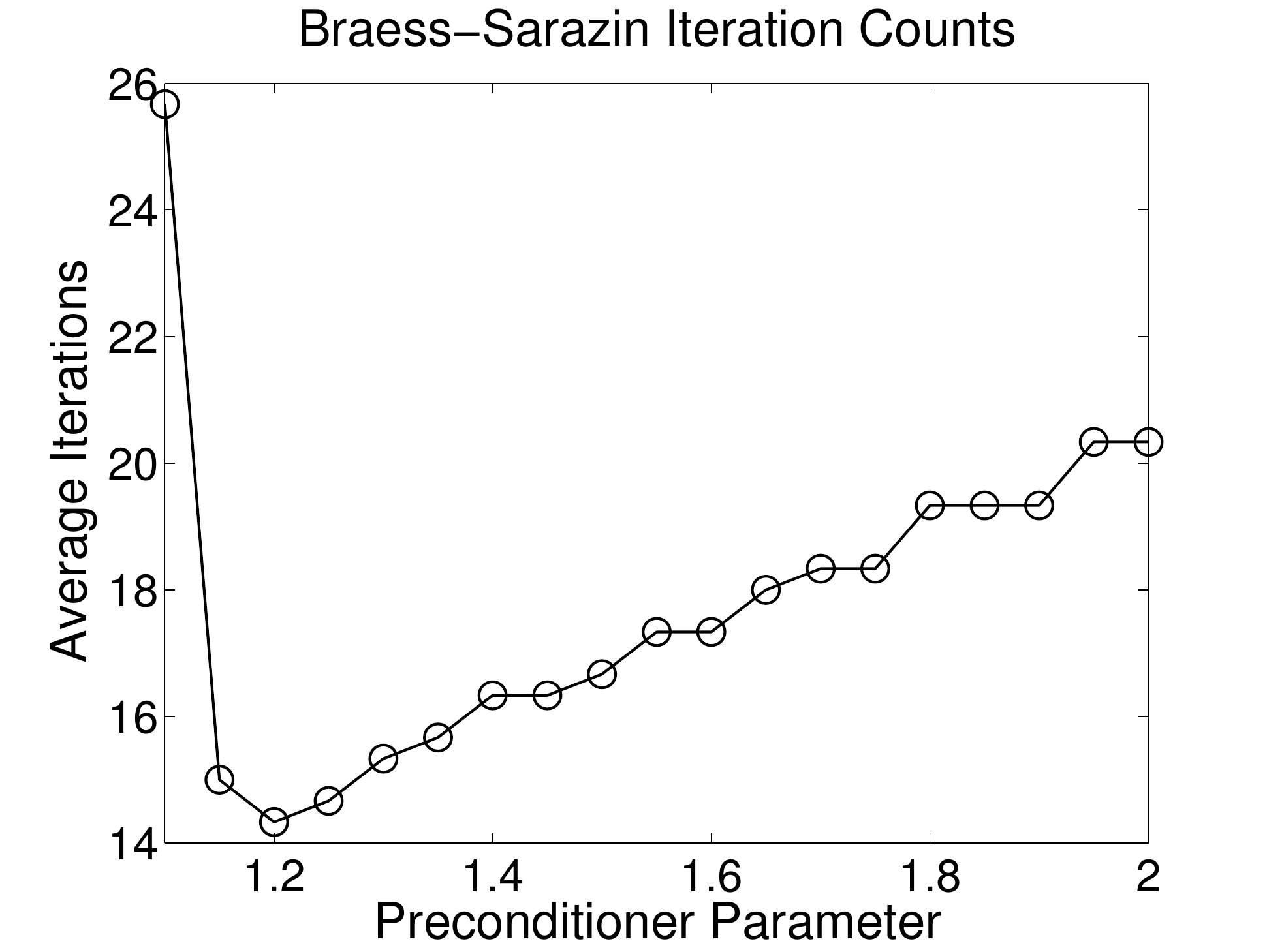}
  \caption{}
  \label{fig:left}
\end{subfigure}
\begin{subfigure}[b]{.49 \textwidth}
\raggedright
  \includegraphics[scale=.35]{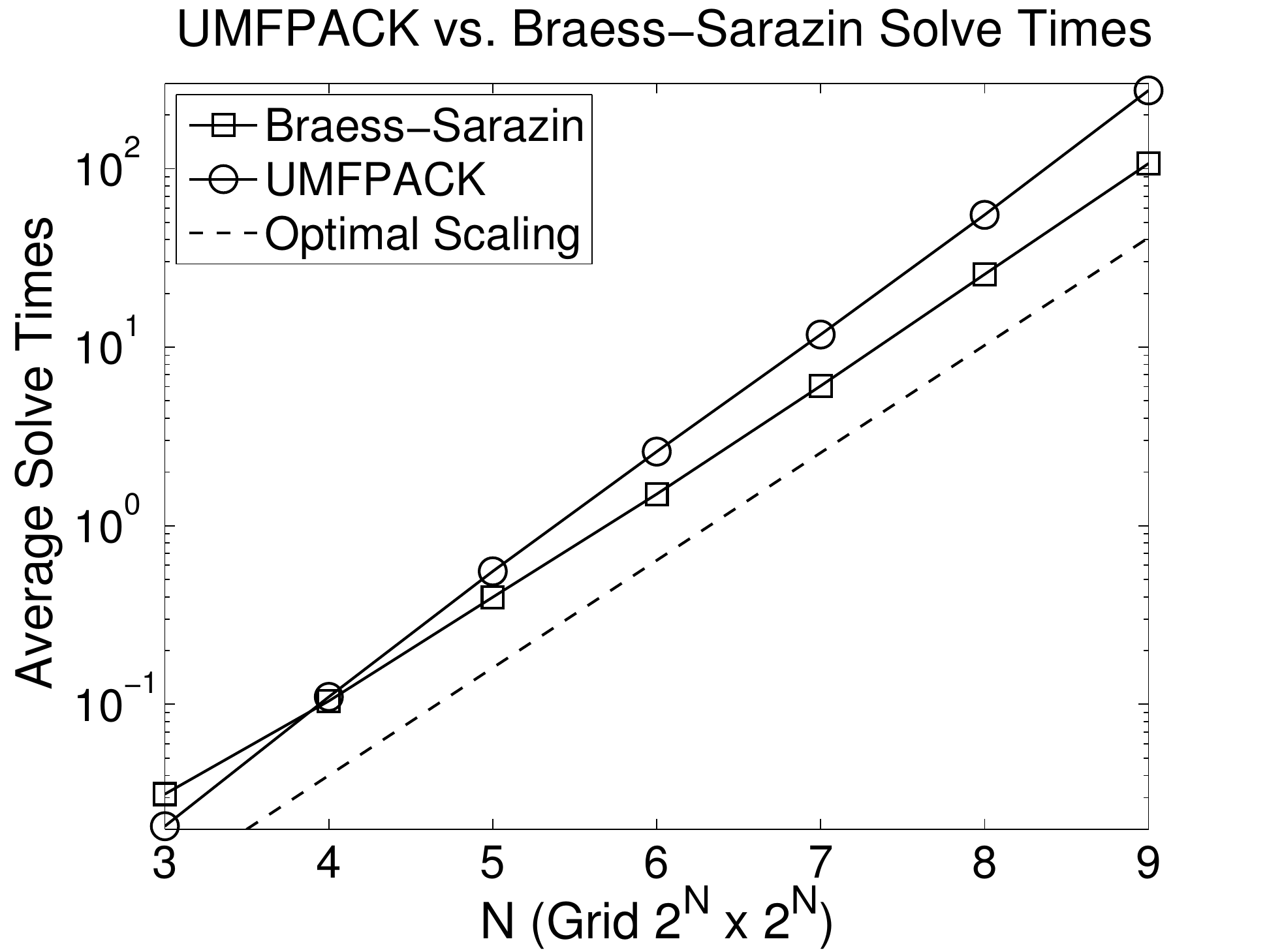}
  \caption{}
  \label{fig:right}
\end{subfigure}
\caption{\small{(\subref{fig:left}) The average number of multigrid iterations for varying $\gamma_b$ on a $512 \times 512$ grid. (\subref{fig:right}) The average time to solution for the Braess-Sarazin multigrid scheme compared to the UMFPACK direct solver.}}
\label{BraessParameterStudy}
\end{figure}

Figure \ref{BraessParameterStudy}\subref{fig:right} exhibits average total setup and solve times for grid sizes from $8 \times 8$ to $512 \times 512$ for the UMFPACK direct solver and the Braess-Sarazin-type multigrid scheme. Using $Q_2$--$Q_2$--$P_0$ elements for $\director$, $\phi$, and $\lambda$, respectively, the matrices on the $512 \times 512$ grid are of dimension $4,464,644 \times 4,464,644$ with $289,969,900$ nonzero entries. Here, the Braess-Sarazin multigrid scheme is scaling optimally with the grid size. Furthermore, there is a clear timing crossover at the $16 \times 16$ mesh, at which point Braess-Sarazin becomes the faster solver. This timing intersection occurs considerably earlier than the Vanka-type relaxation scheme discussed in \cite{Emerson2}.

\begin{table}[h!]
\centering
{\small
\begin{tabular}{|c|c|c|c|c|}
\hline
& \multicolumn{2}{|c|}{UMFPACK Solve} & \multicolumn{2}{|c|}{Braess-Sarazin Solve} \\
\hline
Trust-Region Type & None & Simple & None & Simple \\
\hlinewd{1.3pt}
System Assembly Time & $136.1$s & $131.3$s & $136.8$s & $131.8$s \\
\hline
Data Conversion Time & $-$ & $-$ & $136.9$s & $132.6$s \\
\hline
Linear Setup/Solve Time & $1053.2$s & $1035.2$s & $436.8$s & $425.2$s \\
\hline
Memory/Output Time & $284.7$s & $283.2$s & $305.8$s & $302.6$s \\
\hlinewd{1.3pt}
Total Time & $1474.0$s & $1449.7$s & $1016.3$s & $992.2$s \\
\hline
\end{tabular}
}
\caption{\small{A comparison of timing breakdowns for runs using the UMFPACK direct solver or the Braess-Sarazin multigrid scheme. Each solver is run with and without trust regions.}}
\label{UMFPACKBraessStatistics}
\end{table}

Table \ref{UMFPACKBraessStatistics} details a comparison of the UMFPACK direct solver's performance to that of the Braess-Sarazin-type multigrid scheme. With and without trust regions, the computed free energy between the two solvers is identically $16.413$. However, Braess-Sarazin-type relaxation reduces overall runtime by nearly $33\%$. This speed up is most notable when considering the fact that overall runtime for the multigrid solver experiments includes porting variables to types compatible with the Trilinos computational library \cite{Trilinos1} and computing collocation information for the Braess-Sarazin-type relaxation. Using the Lagrange multiplier formulation, nested iteration, multigrid, and trust regions, we obtain a robust and efficient algorithm.

\section{Summary and Future Work} \label{conclusion}

We have discussed three approaches for imposing the pointwise unit-length constraint required by the Frank-Oseen free-energy model for nematic liquid crystals, including a Lagrange multiplier formulation and a penalty method, with and without renormalization. Theory establishing the well-posedness of the intermediate discrete linearization systems for the penalty method was presented. Such theory parallels the theory establishing the well-posedness of the Lagrange multiplier linearization systems proved in \cite{Emerson1}. In addition, trust-region methods specifically tailored to each of the formulations were discussed and implemented.

The ensuing algorithms were compared for three benchmark equilibrium problems. The experiments suggested that the Lagrange multiplier method coupled with nested iteration is the most accurate and efficient approach for enforcing the unit-length constraint. Trust-region schemes were shown to increase overall robustness with very little extra cost and, therefore, should be considered when using either constraint formulation. Furthermore, nested iteration was observed to be exceedingly effective at reducing computational costs for all problems and methods. Finally, a Braess-Sarazin-type multigrid method, based on work in \cite{Benson1}, was introduced for the Lagrange multiplier formulation and successfully applied to a highly difficult liquid crystal problem with electric and flexoelectric coupling. The method quickly and accurately computed the expected equilibrium configuration.

The current implementation utilizes uniform grid refinement to build the nested iteration hierarchy of grids. Future work will include study of adaptive refinement techniques. Because the energy minimization formulation does not yield an obvious a posteriori error estimator, new techniques will be explored to flag cells for refinement. With an accurate and efficient approach for liquid crystal equilibrium configurations established, research into the application of energy-minimization finite-element approaches to liquid crystal flow problems will also be undertaken.


\section*{Acknowledgments}
The authors would like to thank Professor Timothy Atherton for his useful suggestions and contributions. We would also like to thank Thomas Benson for allowing us to use and adapt his code, as well as his helpful contributions.


\bibliographystyle{plain}	


\bibliography{LiquidCrystalPenalty}		

\end{document}